\documentclass{article}

\usepackage[left=3cm,right=3cm, top=3cm,bottom=2cm,bindingoffset=0cm]{geometry}
    
\usepackage{graphicx}
\usepackage{amsmath}
\usepackage{amsthm}
\usepackage{amssymb}
\usepackage[latin1]{inputenc}
\usepackage{subfigure}
\usepackage[T1]{fontenc}
\usepackage{array}
\usepackage{multicol}

\newtheorem{theorem}{Theorem}

\newtheorem{defn}{Definition}
\newtheorem{lem}{Lemma}
\newtheorem{proposition}{Proposition}
\newtheorem{corollary}{Corollary}
\newtheorem{example}{Example}

\newtheorem{remark}{Remark}

\newcommand{\ga}{\gamma}
\newcommand{\al}{\alpha}
\newcommand{\be}{\beta}

\newcommand{\la}{\lambda}
\newcommand{\eps}{\varepsilon}

\usepackage{tikz, tikz-3dplot, pgfplots}
\usepackage{bm, relsize}
\usepackage{tkz-graph}
\usetikzlibrary[positioning,patterns] 

\usepackage{pgf}
\usepackage{mathrsfs}
\usetikzlibrary{arrows}

\pgfplotsset{compat=1.8}

\tikzstyle{bsq}=[rectangle, draw, thick, minimum width=1cm, minimum height=1cm] 
\tikzstyle{bver}=[rectangle, draw, thick, minimum width=1cm, minimum height=2cm]
\tikzstyle{bhor}=[rectangle, draw, thick, minimum width=2cm, minimum height=1cm]

\tikzstyle{bsqg}=[rectangle, draw=gray!30!white, thick, fill=gray!30!white, minimum width=1cm, minimum height=1cm] 
\tikzstyle{bverg}=[rectangle, draw=gray!30!white, thick, fill=gray!30!white, minimum width=1cm, minimum height=2cm]
\tikzstyle{bhorg}=[rectangle, draw=gray!30!white, thick, fill=gray!30!white, minimum width=2cm, minimum height=1cm]

\providecommand{\keywords}[1]{\textbf{\textit{Keywords:}} #1}

\def\@address{\relax}
\def \address{\@getaddress}
\def \@getaddress#1{{
  \gdef \@address{#1}
}}
\newcommand \addressmark[1]{%
    $^{#1}$%
}
\usepackage[backend=bibtex]{biblatex}
\addbibresource{biblio.bib}
\makeatletter
\providecommand*{\shuffle}{%
  \mathbin{\mathpalette\shuffle@{}}%
}
\newcommand*{\shuffle@}[2]{%
  \sbox0{$#1\vcenter{}$}%
  \kern .15\ht0 
  \rlap{\vrule height .25\ht0 depth 0pt width 2.5\ht0}%
  \raise.1\ht0\hbox to 2.5\ht0{%
    \vrule height 1.75\ht0 depth -.1\ht0 width .17\ht0 %
    \hfill
    \vrule height 1.75\ht0 depth -.1\ht0 width .17\ht0 %
    \hfill
    \vrule height 1.75\ht0 depth -.1\ht0 width .17\ht0 %
  }%
  \kern .15\ht0 
}
\makeatother

\begin{document}

\title{\vspace{-25pt} A domino tableau-based view on type B Schur-positivity}
\author{Alina R. Mayorova\addressmark{1}\addressmark{2} \and Ekaterina A. Vassilieva\addressmark{2}}
\date{}
\address{\addressmark{1}Department of Higher Algebra, Moscow State University, Moscow, Russia\\ \addressmark{2}Laboratoire d'Informatique de l'Ecole Polytechnique, 91128 Palaiseau Cedex, France }
\maketitle
\vspace{-16pt}
{\centering \small \itshape \@address \par}

\begin{abstract}
Over the past years, major attention has been drawn to the question of identifying Schur-positive sets, i.e. sets of permutations whose associated quasisymmetric function is symmetric and can be written as a non-negative sum of Schur symmetric functions. The set of arc permutations, i.e. the set of permutations $\pi$ in $S_n$ such that for any $1\leq j \leq n$, $\{\pi(1),\pi(2),\dots,\pi(j)\}$ is an interval in $\mathbb{Z}_n$ is one of the most noticeable examples.
This paper introduces a new type B extension of Schur-positivity to signed permutations based on Chow's quasisymmetric functions and generating functions for domino tableaux. As an important characteristic, our development is compatible with the works of Solomon regarding the descent algebra of Coxeter groups. In particular, we design descent preserving bijections between signed arc permutations and sets of domino tableaux to show that they are indeed type B Schur-positive.
\end{abstract}
\keywords{Signed arc permutations, Schur-positivity, type B quasisymmetric functions, domino tableaux.}

\section{Background}
\subsection{Young tableaux and descent sets}
For any positive integer $n$ write $[n] = \{1,\dots, n\}$ and $S_n$ the symmetric group on $[n]$. A  {\bf partition} $\la$ of an integer $n$, denoted $\la \vdash n$  is a sequence $\la=(\la_1,\la_2,\dots,\la_p)$ of $\ell(\la)=p$ parts sorted in decreasing order such that $|\la| = \sum_i{\la_i} = n$.  A partition $\la$ is usually represented as a~Young diagram of $n=|\la|$ boxes arranged in $\ell(\la)$ left justified rows so that the $i$-th row from the top contains $\la_i$ boxes.  A Young diagram whose boxes are filled with positive integers such that the entries are increasing along the rows and strictly increasing down the columns is called a {\bf semistandard Young tableau}. If the entries of a semistandard Young tableau are restricted to the elements of $[n]$ and strictly increasing along the rows, we call it a {\bf standard Young tableau}. The partition $\la$ is the {\bf shape} of the tableau, and we denote $SYT(\la)$ (resp. $SSYT(\la)$) the set of standard (resp. semistandard) Young tableaux of shape $\la$. 
\begin{example}
\label{example : YT}
The diagrams on Figure \ref{fig : YT} are standard Young tableaux of shape $\la = (6,4,2,1,1)$.
\begin{figure} [h]
\label{fig : YT}
$$
T = \begin{matrix} 
\resizebox{!}{2.5cm}{%
\begin{tikzpicture}[node distance=0 cm,outer sep = 0pt]
	      \node[bsq] (1) at (0,  0) {\bf \Large 1};
	      \node[bsq] (2) [below = of 1] {\bf \Large 2};   
	      \node[bsq] (3) [right = of 1] {\bf \Large 3};
	      \node[bsq] (4) [right = of 3] {\bf \Large 4};	      
	      \node[bsq] (5) [below = of 2] {\bf \Large 5};   
	      \node[bsq] (6) [right = of 2] {\bf \Large 6};      
	      \node[bsq] (7) [right = of 6] {\bf \Large 7};     
	      \node[bsq] (8) [right = of 4] {\bf \Large 8};     
	      \node[bsq] (9) [right = of 8] {\bf \Large 9};   
	      \node[bsq] (10) [right = of 7] {\bf \Large 10}; 	       	      
	      \node[bsq] (11) [below = of 5] {\bf \Large 11};   
	      \node[bsq] (12) [right = of 5] {\bf \Large 12};  	      
	      \node[bsq] (13) [below = of 11] {\bf \Large 13};  
	      \node[bsq] (14) [right = of 9] {\bf \Large 14};   
\end{tikzpicture}
}
\end{matrix}
~~~
U = \begin{matrix} 
\resizebox{!}{2.5cm}{%
\begin{tikzpicture}[node distance=0 cm,outer sep = 0pt]
	      \node[bsq] (1) at (0,  0) {\bf \Large 1};
	      \node[bsq] (2) [below = of 1] {\bf \Large 2};   
	      \node[bsq] (3) [right = of 1] {\bf \Large 3};
	      \node[bsq] (4) [right = of 3] {\bf \Large 4};	      
	      \node[bsq] (5) [below = of 2] {\bf \Large 5};   
	      \node[bsq] (6) [right = of 2] {\bf \Large 6};      
	      \node[bsq] (7) [right = of 6] {\bf \Large 12};     
	      \node[bsq] (8) [right = of 7] {\bf \Large 14};     
	      \node[bsq] (9) [right = of 4] {\bf \Large 7};   
	      \node[bsq] (10) [right = of 5] {\bf \Large 13}; 	       	      
	      \node[bsq] (11) [below = of 5] {\bf \Large 10};   
	      \node[bsq] (12) [right = of 9] {\bf \Large 8};  	      
	      \node[bsq] (13) [below = of 11] {\bf \Large 11};  
	      \node[bsq] (14) [right = of 12] {\bf \Large 9};  
\end{tikzpicture}
}
\end{matrix}
$$
\caption{Two standard Young tableaux of shape $(6,4,2,1,1)$ and descent set $\{1,4,9,10,12\}$.}
\end{figure}

\end{example}
Define the {\bf descent set of a standard Young tableau} $T$ as $Des(T) = \{1\leq i \leq n-1\mid i $ is in a strictly higher row than $i+1\}$. For instance the descent set of the tableaux in Example~\ref{example : YT} is $\{1,4,9,10,12\}$. Similarly, the descent set of a permutation $\pi$ in $S_n$ is the subset of $[n-1]$ defined as $Des(\pi) = \{1\leq i \leq n-1\mid \pi(i)>\pi(i+1)\}$.\\
The {\it Robinson-Schensted (RS) correspondence} (\cite{Rob38, Sch61}) is a bijection between permutations $\pi$ in $S_n$ and ordered pairs of standard Young tableaux $(P,Q)$ of the same shape $\la \vdash n$. This bijection is decent preserving in the sense that
\begin{align*}
&Des(\pi) = Des(Q),\\
&Des(\pi^{\text{-}1}) = Des(P).
\end{align*}
\begin{example}
\label{example : RS}
Figure \ref{fig : RS} shows a permutation $\pi$ in $S_5$ and its image $(P,Q)$ by the RS correspondence. The descent preserving property reads $Des(\pi) = Des(Q) = \{1, 3\}$ and $Des(\pi^{\text{-1}}) = Des(P) = \{1,3,4\}$.
\begin{figure} [h]
\label{fig : RS}
$$
\pi = 52413
~~\xrightarrow[~~~~~~]{}~~ 
\left ( 
P = \begin{matrix} 
\resizebox{!}{1.5cm}{%
\begin{tikzpicture}[node distance=0 cm,outer sep = 0pt]
	      \node[bsq] (1) at (0,  0) {\bf \Large 1};
	      \node[bsq] (2) [below = of 1] {\bf \Large 2};   
	      \node[bsq] (3) [right = of 1] {\bf \Large 3};
	      \node[bsq] (4) [below = of 3] {\bf \Large 4};	      
	      \node[bsq] (5) [below = of 2] {\bf \Large 5};    
\end{tikzpicture}
}
\end{matrix}
,~~~
Q = \begin{matrix} 
\resizebox{!}{1.5cm}{%
\begin{tikzpicture}[node distance=0 cm,outer sep = 0pt]
	      \node[bsq] (1) at (0,  0) {\bf \Large 1};
	      \node[bsq] (2) [below = of 1] {\bf \Large 2};   
	      \node[bsq] (3) [right = of 1] {\bf \Large 3};
	      \node[bsq] (4) [below = of 3] {\bf \Large 5};	      
	      \node[bsq] (5) [below = of 2] {\bf \Large 4};    
\end{tikzpicture}
}
\end{matrix}
~
\right )
$$
\caption{A permutation and its image by the RS correspondence.}
\end{figure}

\end{example}
\subsection{Schur-positivity}
 
Let $X =\{x_1,x_2,\dots\}$ be a totally ordered set of commutative indeterminates.
Given any subset $\mathcal{A}$ of permutations in $S_n$, Gessel introduces in~\cite{Ges84} the formal power series in $\mathbb{C}[X]$:
$$
\mathcal{Q}(\mathcal{A})(X) = \sum_{\pi \in \mathcal{A}} F_{Des(\pi)}(X),
$$
where for any subset $I \subseteq [n-1]$, $F_I(X)$ is the {\bf fundamental quasisymmetric function} defined by:
\begin{equation*}
F_{I}(X) = \sum\limits_{\substack{i_1 \leq \dots \leq i_n\\k\in I \Rightarrow i_k<i_{k+1}}} x_{i_1}x_{i_2} \dots x_{i_n}.
\end{equation*}
The power series $F_{I}(X)$ is not symmetric in $X$ but verifies the property that for any strictly increasing sequence of indices $i_1 < i_2 < \dots < i_p$ the coefficient of $x_1^{k_1}  x_2^{k_2}  \dots  x_p^{k_p}$ is equal to the coefficient of $x_{i_1}^{k_1}  x_{i_2}^{k_2}  \dots  x_{i_p}^{k_p}$. In~\cite{GesReu93} Gessel and Reutenauer looked at the problem of characterising the sets $\mathcal{A}$ for which $\mathcal{Q}(\mathcal{A})$ is symmetric. Further the question of determining {\bf Schur-positive} sets, i.e. the sets $\mathcal{A}$  for which $\mathcal{Q}(\mathcal{A})$ can be expanded with non-negative coefficients in the Schur basis received significant attention.\\ Classical examples of Schur-positive sets include inverse descent classes, Knuth classes \cite{Ges84} and conjugacy classes \cite{GesReu93}. As a more sophisticated example, Elizalde and Roichman proved \cite{EliRoi14} the Schur-positivity of {\bf arc permutations}, i.e. the set $\mathcal{A}_n$ of permutations $\pi$ in $S_n$ such that for any $1\leq j \leq n$, $\{\pi(1),\pi(2),\dots,\pi(j)\}$ is an interval in $\mathbb{Z}_n$. Arc permutations are alternatively defined as the set of permutations in $S_n$ avoiding the patterns $\sigma$ in $S_4$ such that $|\sigma(1) -\sigma(2)| = 2$. Other advanced examples of Schur-positive sets can be found in \cite{EliRoi17}. Many of these results are the consequence of two main facts.
\begin{enumerate}
\item Denote $s_\lambda$ the Schur symmetric functions indexed by $\la \vdash n$. $s_\lambda$ is the generating function for semistandard Young tableaux of shape $\la$. It follows (see e.g. \cite[7.19.7]{Sta01}) that 
\begin{align}
\label{eq : Sdecomp} s_\la(X) &=\sum_{T \in SSYT(\la)}X^T = \sum_{T \in SYT(\la)}F_{Des(T)}(X).
\end{align}
\item There are various descent preserving bijections relating sets of permutations and standard Young tableaux, e.g. the RS correspondence.
\end{enumerate}
The proof of Elizalde and Roichman in \cite{EliRoi14} also uses Equation (\ref{eq : Sdecomp}) and relies on a custom bijection between arc permutations and standard Young tableaux of shapes $(n-k,1^k)$ and $(n-k,2,1^{k-2})$. As a result they get the following equation for the quasisymmetric function of arc permutations. 
\begin{equation*}
\sum_{\pi \in \mathcal{A}_n} F_{Des(\pi)} = s_{(n)}+s_{(1^n)}+2\sum_{1\leq k \leq n-2}s_{(n-k,1^k)}+\sum_{2\leq k \leq n-2}s_{(n-k,2,1^{k-2})}.
\end{equation*}

A {\bf type B extension} of Schur-positivity deals with $B_n$, the {\bf hyperoctahedral group} of order $n$ instead of $S_n$. $B_n$ is composed of all permutations $\pi$ on $\{\text{-}n, \dots,\text{-}2, \text{-}1, 0, 1, 2, \dots, n\}$ such that $\pi(-i) = -\pi(i)$ for all $i$. Such permutations usually referred to as {\bf signed permutations} are fully described by their restriction to $[n]$. To extend items 1 and 2 above, two options are available and depend on the definition for the descent of signed permutations.\\
As a first approach, Adin et al. in \cite{AdiAthEliRoi15} use the notion of {\bf signed descent set}, i.e. the ordered pair $(S,\eps)$ defined for $\pi \in B_n$ as 
$$
S = \{n\}\cup \{1\leq i \leq n-1\mid \begin{cases}  \pi(i) > \pi(i+1), & \mbox{ if } \pi(i) >0 \\   \mbox{either } \pi(i+1) >0 \mbox{ or }  |\pi(i)| > |\pi(i+1)|, & \mbox{ if } \pi(i)<0 \end{cases} \}
$$
  and $\eps$ is the mapping from $S$ to $\{-,+\}$ defined as $\eps(s) = +$ if $\pi(s)>0$ and $\eps(s) = -$, otherwise. There is a signed descent preserving analogue of the RS correspondence relating signed permutations and {\it bi-tableaux}, i.e. ordered pairs of Young tableaux with specific constraints and \cite{AdiAthEliRoi15} proves an analogue of Equality (\ref{eq : Sdecomp}) between their generating function and {\it Poirier's signed quasisymmetric functions}. \\
While the authors succeed in extending most of the results known in type A, this definition of the descent set of a permutation is not directly conform to the work of Solomon in \cite{Sol76} on the descent algebra of Coxeter groups. As a result providing another framework relying on a more intuitive definition of descent conform to the one of Solomon in the case of signed permutations appears as a natural question. We use the following definition of the {\bf descent set} of $\pi \in B_n$ as the subset of $\{0\}\cup[n-1]$ equal to 
$$
Des(\pi) = \{0 \leq i \leq n-1 \mid \pi(i) > \pi(i+1)\}.
$$ 
Note that the main difference with respect to the case of the symmetric group is the possible descent in position $0$ when $\pi(1)$ is a negative integer. A bijection by Barbash and Vogan \cite{BarVog82} provides a descent preserving analogue of the RS correspondence that relates signed permutations and {\it domino tableaux} (see next section).\\
In Section \ref{sec : typeB}, we use our {\it modified generating function for domino tableaux} \cite{MayVas19_2} and {\it Chow's type B quasisymmetric functions} \cite{Cho01} to develop this alternative type B extension of Schur-positivity. In Section \ref{sec : arcB}, we introduce a new descent preserving bijection between {\it signed arc permutations} and domino tableaux to show that the former set is type B Schur-positive according to the definition of descent stated above. Finally, Section \ref{sec : AltProof} aims at showing the connections between our approach and the one of \cite{AdiAthEliRoi15}. 
\section{A new definition of type B Schur-positivity based on Chow's quasisymmetric functions and domino functions}
 \label{sec : typeB}
\subsection{Domino tableaux} 
For $\la \vdash 2n$, a {\bf standard domino tableau} $T$ of shape $\la$ is a Young diagram of shape $shape(T)=\la$ tiled by {\bf dominoes}, i.e. $2\times1$ or $1\times2$ rectangles filled with the elements of $[n]$ such that the entries are strictly increasing along the rows and down the columns. In the sequel we consider only the set $\mathcal{P}^0(n)$ of {\bf empty $\bf 2$-core partitions} $\la \vdash 2n$ that fit such a tiling. A standard domino tableau $T$ has a descent in position $i>0$ if $i+1$ lies strictly below $i$ in $T$ and has a descent in position $0$ if the domino filled with $1$ is vertical. We denote $Des(T)$ the set of all its descents.\\ 
We call a {\bf semistandard domino tableau} T of shape $\la \in \mathcal{P}^0(n)$ and weight $w(T) = \mu =(\mu_0, \mu_1, \mu_2,\dots)$ with $\mu_i \geq 0$ and $\sum_i \mu_i = n$ a tiling of the Young diagram of shape $\la$ with horizontal and vertical dominoes labelled with integers in $\{0,1,2,\dots\}$ such that labels are non-decreasing along the rows, strictly increasing down the columns and exactly $\mu_i$ dominoes are labelled with $i$. 
If the top leftmost domino is vertical, it cannot be labelled $0$. Denote $SDT(\la)$ (resp. $SSDT(\la)$) the set of standard (resp. semistandard) domino tableaux of shape $\la$.\\ 

\begin{remark}
Our definition of semistandard domino tableaux differs from the classical one  by the addition of '$\it 0$' entries to the domino tableaux in some cases. We need this modification to connect their generating functions to Chow's type B quasisymmetric functions and get our type B analogue of Equation (\ref{eq : Sdecomp}). 
\end{remark}
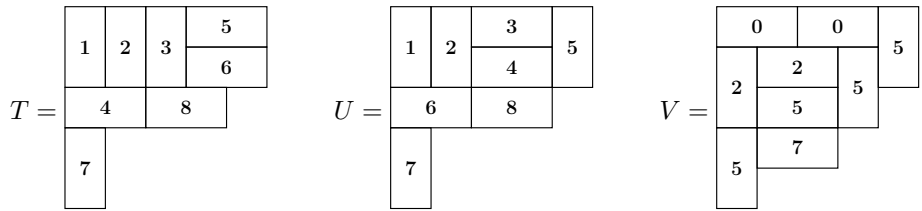
\begin{figure} [h]
\begin{center}
$$
T =
\begin{matrix}
\resizebox{!}{2.7cm}{%
\begin{tikzpicture}[node distance=0 cm,outer sep = 0pt]
        \node[bver] (1) at ( 0,   0) {\bf \Large 1};
        \node[bver] (2) [right = of 1] {\bf \Large 2};
        \node[bver] (3) [right = of 2] {\bf \Large 3};         
        \node[bhor] (4) at ( 0.5,   -1.5) {\bf \Large 4}; 
        \node[bhor] (5) at ( 3.5,   0.5) {\bf \Large 5};        
        \node[bhor] (6) [below = of 5] {\bf \Large 6};      
        \node[bver] (7) at ( 0,   -3) {\bf \Large 7};       
        \node[bhor] (8) [right = of 4] {\bf \Large 8};
\end{tikzpicture}  
}
\end{matrix}
~~~~~~~
U = 
\begin{matrix}
\resizebox{!}{2.7cm}{%
\begin{tikzpicture}[node distance=0 cm,outer sep = 0pt]
        \node[bver] (1) at ( 0,   0) {\bf \Large 1};
        \node[bver] (2) [right = of 1] {\bf \Large 2};
        \node[bhor] (3) at ( 2.5,   0.5) {\bf \Large 3};           
        \node[bhor] (4) [below = of 3] {\bf \Large 4}; 
        \node[bver] (5) at ( 4,   0) {\bf \Large 5};        
        \node[bhor] (6) at ( 0.5,   -1.5) {\bf \Large 6};  
        \node[bver] (7) at ( 0,   -3) {\bf \Large 7};      
        \node[bhor] (8) [right = of 6] {\bf \Large 8};
\end{tikzpicture}  
}
\end{matrix}
~~~~~~~
V = 
\begin{matrix}
\resizebox{!}{2.7cm}{%
\begin{tikzpicture}[node distance=0 cm,outer sep = 0pt, line width=1pt]
        \node[bhor] (1) at ( 0,   0) {\bf \Large 0};
        \node[bhor] (2) [right = of 1] {\bf \Large 0};
        \node[bver] (3) at ( -0.5,   -1.5) {\bf \Large 2};
        \node[bhor] (4) at ( 1,   -1) {\bf \Large 2};     
        \node[bver] (5) [below = of 3] {\bf \Large 5};    
        \node[bhor] (6) [below = of 4] {\bf \Large 5};       
        \node[bver] (7) at ( 2.5,   -1.5) {\bf \Large 5};    
        \node[bver] (8) at ( 3.5,   -0.5) {\bf \Large 5};           
        \node[bhor] (9) [below = of 6] {\bf \Large 7};
\end{tikzpicture}  
}
\end{matrix}
$$
\end{center}
\caption{
Two standard domino tableaux $T$ and $U$ of shape $(5,5,4,1,1)$ and descent set \{0,3,5,6\} and a semistandard domino tableau $V$ of shape $(5,5,4,3,1)$ and weight $\mu=(2,0,2,0,0,4,0,1)$. 
}
\label{fig : DTs}
\end{figure}

In \cite{MayVas19_2}, we introduce a variant of the generating function  for semistandard domino tableaux taking into account the zero values called {\bf domino function}. 
\begin{defn}[Domino functions]
Given an alphabet $X$ and a semistandard domino tableau $T$ of weight $\mu$, denote $X^T$ the monomial $x_0^{\mu_0}x_1^{\mu_1}x_2^{\mu_2}\dots$. For $\la \in \mathcal{P}^0(n)$ we call the {\bf domino function} indexed by $\la$ the function defined in the alphabet $X$ by
\begin{equation}
\mathcal{G}_\la(X)  = \sum\limits_{T \in SSDT(\la)}{X}^{T}.
\end{equation}
\end{defn}

Finally, there is a natural analogue of the RS-correspondence for signed permutations involving domino tableaux. Indeed, 
Barbash and Vogan (\cite{BarVog82}) built a bijection between signed permutations of $B_n$ and ordered pairs of standard domino tableaux of equal shape in $\mathcal{P}^0(n)$. An independent development on the subject is due to Garfinkle in \cite{Gar90, Gar92, Gar93}. Van Leeuwen shows in \cite{Lee96} that the two approaches are actually equivalent. See also Stanton and White in \cite{StaWhi85} for a more general treatment of {\it rim hook tableaux}.
Ta\c{s}kin (\cite[Prop. 26]{Tas12}) shows that the two standard domino tableaux associated to a signed permutation $\pi$ by the algorithm of Barbash and Vogan have respective descent sets $Des(\pi^{-1})$ and $Des(\pi)$. Finally, Shimozono and White prove in \cite{ShiWhi01} that half the total number of vertical dominoes in the ordered pair of domino tableaux is equal to the number of negative entries in the signed permutation (the {\bf color-to-spin} property).

\begin{example}
Figure \ref{fig : BV} shows the image $(P,Q)$ of the signed permutation $\pi = \text{-}3~8~5~\text{-}2~1~\text{-}9~\text{-}7~4~\text{-}6$ by the Barbash and Vogan bijection. One can check that $Des(\pi) = Des(Q) = \{0,2,3,5,8\}$ and $Des(\pi^{\text{-}1}) = Des(P) = \{1,4,5,8\}$. Note that the total number of vertical dominoes in $P$ and $Q$ is equal to $10$, i.e. twice the number of negative entries in $\pi$.  
\end{example}
\begin{figure} [h]
\begin{center}
$$
\pi = \text{-}3~8~5~\text{-}2~1~\text{-}9~\text{-}7~4~\text{-}6
~~\xrightarrow[~~~~~~]{}~~ 
\left ( 
P =
\begin{matrix}
\resizebox{!}{2.3cm}{%
\begin{tikzpicture}[node distance=0 cm,outer sep = 0pt]
        \node[bhor] (1) at ( 0,   0) {\bf \Large 1};
        \node[bhor] (2) [below = of 1] {\bf \Large 2};
        \node[bver] (3) at ( 1.5,   -0.5) {\bf \Large 3};         
        \node[bhor] (4) at ( 3,   0) {\bf \Large 4}; 
        \node[bhor] (5) [below = of 4] {\bf \Large 5};        
        \node[bver] (6) at ( -0.5,   -2.5) {\bf \Large 6};      
        \node[bver] (7)  [right = of 6] {\bf \Large 7};       
        \node[bver] (8) [right = of 7] {\bf \Large 8};
        \node[bhor] (9) at ( 0,   -4) {\bf \Large 9};
\end{tikzpicture}  
}
\end{matrix}
,~~~
Q = 
\begin{matrix}
\resizebox{!}{2.3cm}{%
\begin{tikzpicture}[node distance=0 cm,outer sep = 0pt]
         \node[bver] (1) at ( 0,   0) {\bf \Large 1};
        \node[bhor] (2) at ( 1.5,   0.5) {\bf \Large 2};
        \node[bhor] (3) [below = of 2] {\bf \Large 3};         
        \node[bhor] (4) at ( 0.5,   -1.5) {\bf \Large 4}; 
        \node[bver] (5) at ( 3,   0)  {\bf \Large 5};        
        \node[bver] (6) at ( 0,   -3) {\bf \Large 6};      
        \node[bver] (7)  [right = of 6] {\bf \Large 7};       
        \node[bver] (8) [right = of 5] {\bf \Large 8};
        \node[bver] (9) at ( 2,   -2) {\bf \Large 9};
\end{tikzpicture}  
}
\end{matrix}
~~\right )
$$
\end{center}
\caption{
A signed permutation and the associated ordered pair of domino tableaux. 
}
\label{fig : BV}
\end{figure}
\subsection{Chow's type B quasisymmetric functions}

Chow defines in \cite{Cho01} an analogue of Gessel's algebra of quasisymmetric functions that is dual to the Solomon's descent algebra of type B.
Let $X = \{x_0, x_1,\dots, x_i,\dots\}$ be a set of totally ordered commutative indeterminates  and $I$ be a subset of $\{0\} \cup [n-1]$, he defines a type B analogue of the fundamental quasisymmetric functions 
\begin{eqnarray*}
F_I^B(X) =  \sum_{\substack{0 = i_0\leq i_1\leq i_2\leq \ldots \leq i_n\\ j\in I \Rightarrow i_j < i_{j+1}}} x_{i_1}x_{i_2}\ldots x_{i_n}.
\end{eqnarray*}
Note the particular r\^ole of the variable $x_0$. 
\begin{example}
Let $n=2$ and $X= \{x_0, x_1, x_2\}$ then 
\begin{align*}
&F_{\emptyset} =x_0^2 + x_1^2 + x_2^2 + x_0x_1+x_0x_2+x_1x_2,\\ &F_{\{1\}} = x_0x_1 +x_0x_2+ x_1x_2,\\
&F_{\{0\}} = x_1^2 + x_2^2 +x_1x_2,\\
&F_{\{0,1\}} = x_1x_2.
\end{align*}
\end{example}

In \cite{MayVas19_2}, we show that Chow's type B fundamental quasisymmetric functions are related to our generating functions for domino tableaux. 
\begin{proposition}[\cite{MayVas19_2}]
Given an alphabet $X = \{x_0, x_1,\dots, x_i,\dots\}$, an integer $n$ and an empty $2$-core partition $\la \in \mathcal{P}^0(n)$, one can expand the domino function $\mathcal{G}_\la$ in terms of type B fundamental quasisymmetric functions as
\begin{equation}
\label{eq : GdX}
\mathcal{G}_\la(X)  = \sum_{T \in SDT(\la)} F^B_{Des(T)}.
\end{equation} 
\end{proposition}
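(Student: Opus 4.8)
The plan is to reproduce, in the domino setting, the standardization argument that underlies the type A identity (\ref{eq : Sdecomp}), the genuinely new features being the distinguished variable $x_0$ and the possible descent in position $0$. First I would define a standardization map $\mathrm{std}\colon SSDT(\la)\to SDT(\la)$: given a semistandard domino tableau $S$ of weight $\mu$, I replace its dominoes by the labels $1,2,\dots,n$ by scanning the labels $0,1,2,\dots$ in increasing order and, within each block of equally labelled dominoes, assigning the next available integers in a fixed reading order chosen so that no two dominoes coming from a common label produce a descent. This yields a standard domino tableau $T=\mathrm{std}(S)$ of the same shape $\la$ and partitions $SSDT(\la)$ into fibers indexed by $T\in SDT(\la)$, so that $\mathcal{G}_\la(X)=\sum_{S\in SSDT(\la)}X^S=\sum_{T\in SDT(\la)}\sum_{S\in\mathrm{std}^{-1}(T)}X^S$.

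Next I would show that each fiber $\mathrm{std}^{-1}(T)$ is parametrized exactly by Chow's monomials. Writing $c_k\in\{0,1,2,\dots\}$ for the label of the domino of $S$ that receives standardized label $k$, recovering $S$ from $T$ amounts to choosing the weakly increasing sequence $c_1\le c_2\le\dots\le c_n$; then the weight of $S$ is $\mu_i=\#\{k:c_k=i\}$ and $X^S=x_{c_1}x_{c_2}\dots x_{c_n}$. Setting $i_0=0$ and $i_k=c_k$, the admissible sequences are characterized by $0=i_0\le i_1\le\dots\le i_n$ together with $i_j<i_{j+1}$ whenever $j\in Des(T)$, so that summing $X^S$ over the fiber produces precisely $F^B_{Des(T)}(X)$. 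Summation over $T\in SDT(\la)$ then gives (\ref{eq : GdX}).

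The main work, and the step I expect to be the chief obstacle, is to justify the descent characterization underlying this parametrization. For a position $i\ge 1$ I must verify that a filling with $c_i=c_{i+1}$ remains a valid semistandard domino tableau exactly when $i\notin Des(T)$; this reduces to the fact that assigning a common label to the dominoes carrying standardized labels $i$ and $i+1$ respects strict increase down columns precisely when $i+1$ does not lie strictly below $i$, the domino analogue of the horizontal-strip condition for semistandard Young tableaux. The genuinely new point is position $0$: since the top-leftmost domino always carries the minimal label and hence receives standardized label $1$, the rule forbidding the label $0$ on a vertical top-leftmost domino forces $c_1\ge 1$ exactly when that domino is vertical, i.e. exactly when $0\in Des(T)$. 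This matches Chow's condition $0\in I\Rightarrow i_0<i_1$ and accounts for the special r\^ole of $x_0$. I anticipate that the careful handling of vertical dominoes — both in fixing the reading order used to define $\mathrm{std}$ and in this position-$0$ analysis — is the delicate part, the remaining fiber-wise summation being routine.
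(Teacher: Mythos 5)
Your standardization argument is correct: the fiber of $\mathrm{std}$ over $T\in SDT(\la)$ is parametrized by weakly increasing sequences $0=i_0\le i_1\le\dots\le i_n$ that are strict at the descents of $T$, with the position-$0$ case handled exactly as you describe (the minimal label sits in the top-leftmost domino, so the no-zero-on-a-vertical-first-domino rule matches Chow's condition $0\in I\Rightarrow i_0<i_1$). Note that the paper itself gives no proof of this proposition --- it is imported from \cite{MayVas19_2} --- but your argument is the natural domino analogue of the standardization proof of Equation (\ref{eq : Sdecomp}) and is essentially the intended one; the one point worth writing out fully is the claim you flag yourself, namely that two equal-labelled dominoes cannot have one strictly below and weakly to the right of the other (which follows from the straight-shape column/row chain argument), so that the left-to-right ordering within a block is the unique descent-free standardization.
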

\subsection{Type B Schur positivity}
Similarly to the type A case, given any subset $\mathcal{B}$ of $B_n$ we look at the Chow's type B quasisymmetric function 
$$\mathcal{Q}(\mathcal{B})(X) = \sum_{\pi \in \mathcal{B}} F^B_{Des(\pi)}(X).$$
Next we proceed with our definition of type B Schur positivity.
\begin{defn}[Type B Schur positivity]
\label{def : SBP}
A set $\mathcal{B} \subset B_n$ is said to be {\bf type B Schur positive} or for short {\bf $\mathcal{G}$-positive} if the function $\mathcal{Q}(\mathcal{B})$ can be written as a non-negative sum of domino functions. 
\end{defn}
This definition seems rather natural with regard to the case of type A as Equation (\ref{eq : GdX}) is a type B analogue of Equation (\ref{eq : Sdecomp}) and there are descent preserving bijections (e.g. Barbash and Vogan) between domino tableaux and signed permutations. Nevertheless, it raises the following remarks.
\begin{remark}
\label{rem : Bsym}
For a non-negative integer $n$, denote $\Lambda_n[X]$ the ring of symmetric functions of degree $n$. According to our definition of type B Schur positivity, the fact that for $\mathcal{B} \subset B_n$ the quasisymmetric function $\mathcal{Q}(\mathcal{B})$ is $\mathcal{G}$-positive does not imply that it belongs to $\Lambda_n[X]$. It actually belongs to the vector space $\Lambda^B_n[X]$ spanned by the functions of the form $x_0^k f(X^*)$ where $X^*=X\setminus\{x_0\}$, $k\leq n$ is a non-negative integer and $f$ is any symmetric function of $\Lambda_{n-k}[X^*]$. Namely, if $\mathcal{Q}(\mathcal{B})$ is $\mathcal{G}$-positive then $$\mathcal{Q}(\mathcal{B}) \in \Lambda^B_n[X]=\sum_{k=0}^n  x_0^{k}\Lambda_{n-k}[X^*].$$
\end{remark}
\begin{proof}
The proof of the statements in Remark \ref{rem : Bsym} relies on the expression of the domino functions in terms of Schur symmetric functions. There is a well known weight preserving (but not descent preserving) bijection between semistandard domino tableaux and bi-tableaux (often attributed to Littlewood \cite{Lit51} with a simpler description in \cite[Algorithm 6.1]{CarLec95}, see Section \ref{sec : BiT} for more details). 
The respective shapes of the two Young tableaux depend only on the shape of the initial semistandard domino tableau. 
Denote $(T^-,T^+)$ the bi-tableau associated to a semistandard domino tableau $T$ of shape $\la \in \mathcal{P}^0(n)$ and $(\la^-,\la^+)$ their respective shapes. We have $|\la^-| + |\la^+| = n$ and $(\la^-,\la^+)$ is called the {\bf $2$-quotient} of $\la$. The partitions $\la^-$ and $\la^+$ depend only on $\la$ and not on the entries of $T$. We show in \cite{MayVas19_2} that
\begin{align}
\label{eq : Gss}
\mathcal{G}_\la(X) = s_{\la^{-}}(X^*)s_{\la^{+}}(X).
\end{align}
Then one can compute
\begin{align*}
\mathcal{G}_\la(X) &= \sum_{ \nu;\;\la^+/ \nu \mbox{ is a horizontal strip}}s_{\la^{-}}(X^*)s_{\nu}(X^*)s_{\la^+/\nu}(x_0),\\
\mathcal{G}_\la(X) &= \sum_{\nu;\;\la^+/\nu \mbox{ is a horizontal strip}}s_{\la^{-}}(X^*)s_{\nu}(X^*)x_0^{|\la^+|-|\nu|},\\
\mathcal{G}_\la(X) &= \sum_{ \nu,\rho;\;\la^+/ \nu \mbox{ is a horizontal strip};\;\rho \vdash |\la^-|+|\nu|}k^\rho_{\la^- \nu}s_{\rho}(X^*)x_0^{|\la^+|-|\nu|},
\end{align*}
where $k^\al_{\be \ga}$ is the {\bf Littlewood-Richardson coefficient} indexed by the three partitions $\al, \be, \ga$,  i.e. the structure constants of the algebra of symmetric functions in the Schur basis and a {\bf horizontal strip} is a skew shape composed of boxes, none of which are in the same column.\\
Denote for a partition $\rho$ and a non-negative integer $n$ with $|\rho|\leq n$ the {\bf type B Schur symmetric function} $s_{(n-|\rho|,\rho)}^B$ defined by $$s_{(n-|\rho|,\rho)}^B(X)=x_0^{n-|\rho|}s_\rho(X^*).$$
One has 
\begin{equation}
\label{Gas}
\mathcal{G}_{\la}(X) = \sum_{|\rho| \leq n}a_{\rho}^\la s_{(n-|\rho|,\rho)}^B(X),
\end{equation} 
where $a_{\rho}^\la$ is a non-negative integer equal to
$$
a_{\rho}^\la = \sum_{\nu \vdash |\rho|-|\la^-|;\;\la^+/\nu \mbox{ is a horizontal strip}}k^\rho_{\la^- \nu}.
$$ 
Finally, note that the family $\{s_{(n-|\rho|,\rho)}^B(X)\}_{|\rho| \leq n}$ is a basis of $\Lambda^B_n[X]$. 
\end{proof}

\begin{remark}
The set of domino functions $\{\mathcal{G}_{\la}\}_{\la \in \mathcal{P}^0(n)}$ is not a basis of $\Lambda^B_n[X]$. Indeed, denote $p(n)$ the number of partitions of integer $n$. One has
$$
|\{\mathcal{G}_{\la}\}_{\la \in \mathcal{P}^0(n)}| = |\mathcal{P}^0(n)| = \sum_{i=0}^n p(n-i)p(i)> \sum_{i=0}^n p(i)= \dim(\Lambda^B_n[X]).
$$
As a result, there might be more than one way to decompose a function in $\Lambda^B_n[X]$ as a sum of domino functions. However, one can show that the subfamily $\{\mathcal{G}_{\la^-,(k)}\}_{k\leq n, \la^- \vdash n-k}$ is a basis of $\Lambda^B_n[X]$ where by abuse of notation we denoted $\la^-,(k)$ the partition in $\mathcal{P}^0(n)$ whose $2$-quotient is equal to $(\la^-,(k))$. We do not use this subfamily for our definition of type B Schur positivity as even the most basic examples (see next subsection) are no longer $\mathcal{G}$-positive. Secondly if a function $\mathcal{Q}(\mathcal{B})$ is $\mathcal{G}$-positive, then it can also be decomposed with non-negative coefficients in the basis $s_{(n-|\rho|,\rho)}^B$ using Equation (\ref{Gas}). 
\end{remark}
\subsection{Examples of type B Schur-positivity}
We proceed with some examples of sets of signed permutations that are $\mathcal{G}$-positive according to our definition \ref{def : SBP}.
\begin{proposition}[Inverse Descent Classes]
\label{prop : IDC} 
Let $J \subset \{0\} \cup [n]$. The inverse descent class $D^{B, -1}_{n, J} = \{\pi \in B_n~|~Des(\pi^{-1}) = J\}$ is $\mathcal{G}$-positive.
\end{proposition}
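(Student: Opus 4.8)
The plan is to transport the problem to domino tableaux through the Barbash--Vogan bijection, exactly mirroring the classical type A argument that inverse descent classes are Schur-positive. Recall that this bijection identifies each $\pi \in B_n$ with an ordered pair $(P,Q)$ of standard domino tableaux of a common shape $\la \in \mathcal{P}^0(n)$, and that by Ta\c{s}kin's result $Des(P) = Des(\pi^{-1})$ while $Des(Q) = Des(\pi)$. The first step is therefore to observe that the condition $Des(\pi^{-1}) = J$ defining $D^{B,-1}_{n,J}$ translates under the bijection into the single requirement $Des(P) = J$, with $Q$ left entirely unconstrained apart from sharing the shape of $P$.

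Next I would rewrite the generating function using $Des(\pi) = Des(Q)$:
\begin{equation*}
\mathcal{Q}(D^{B,-1}_{n,J})(X) = \sum_{\substack{(P,Q)\\ shape(P)=shape(Q)\\ Des(P)=J}} F^B_{Des(Q)}(X).
\end{equation*}
The key manoeuvre is to group this sum by the common shape $\la$. For a fixed $\la$ the choices of $P$ and $Q$ decouple: $P$ ranges over the standard domino tableaux of shape $\la$ with descent set $J$, while $Q$ ranges over all of $SDT(\la)$. Writing $d_{\la,J} = |\{P \in SDT(\la)\mid Des(P)=J\}|$, the sum factors as
\begin{equation*}
\mathcal{Q}(D^{B,-1}_{n,J})(X) = \sum_{\la \in \mathcal{P}^0(n)} d_{\la,J} \sum_{Q \in SDT(\la)} F^B_{Des(Q)}(X).
\end{equation*}

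Finally I would invoke Equation (\ref{eq : GdX}), which identifies the inner sum with the domino function $\mathcal{G}_\la$, yielding
\begin{equation*}
\mathcal{Q}(D^{B,-1}_{n,J})(X) = \sum_{\la \in \mathcal{P}^0(n)} d_{\la,J}\, \mathcal{G}_\la(X).
\end{equation*}
Since each coefficient $d_{\la,J}$ is a cardinality and hence a non-negative integer (possibly zero when no tableau of shape $\la$ realises the descent set $J$), this is by definition a $\mathcal{G}$-positive expansion. There is no genuine analytic obstacle here; the only point demanding care is the bookkeeping of which member of the pair carries which descent set, so that the inverse descent condition attaches to $P$ while the summand $F^B_{Des(\pi)}$ attaches to $Q$. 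This decoupling of $P$ and $Q$ within each fixed shape is precisely what makes the coefficients manifestly non-negative and constitutes the heart of the argument.
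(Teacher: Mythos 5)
Your proof is correct and follows essentially the same route as the paper: apply the Barbash--Vogan bijection with its descent-preserving property, group the pairs $(P,Q)$ by common shape $\la$ so that the sum factors as $\sum_\la d_{\la,J}\sum_{Q\in SDT(\la)}F^B_{Des(Q)}$, and identify the inner sum with $\mathcal{G}_\la$ via Equation (\ref{eq : GdX}). The paper's proof is exactly this computation, so no further comment is needed.
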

\begin{proof}
According to the Barbash and Vogan bijection \cite{BarVog82}, there is a descent preserving bijection between permutations $\pi$ of the hyperoctahedral group and ordered pairs of standard domino tableaux $(P, Q)$ of the same shape that verify $Des(\pi) = Des(Q)$ and $Des(\pi^{-1}) = Des(P)$. 
We use this property to compute $\mathcal{Q}(D^{B, -1}_{n, J})$.
\begin{align*}
\mathcal{Q}(D^{B, -1}_{n, J}) = 
\sum_{\substack{\pi \in B_n \\ Des(\pi^{-1}) = J}} F_{Des(\pi)} =  
\sum_{\la \vdash n} ~\sum_{\substack{P \in SDT(\la)\\ Des(P) = J}} ~\sum_{Q \in SDT(\la)} F^B_{Des(Q)}.
\end{align*}
Since $\mathcal{G}_{\la} = \sum_{Q \in SDT(\la)} F^B_{Des(Q)}$ we get that $\mathcal{Q}(D^{B, -1}_{n, J})$ expands in domino functions with non-negative coefficients.
\end{proof}

Another essential example of a $\mathcal{G}$-positive set is the type B analogue of {\it Knuth classes}. Denote $(P_{\pi}, Q_{\pi})$ the ordered pair of standard domino tableaux that is the image of the signed permutation $\pi$ by the Barbash and Vogan bijection \cite{BarVog82}.  
Given a standard domino tableau $T$ we denote $\mathcal{C}^B_T$ the {\bf type B Knuth class} defined by
$$\mathcal{C}^B_T = \{\pi \in B_n~|~P_{\pi} = T\}.$$
\begin{proposition}
Let $T$ be a standard domino tableau. The type B Knuth class $\mathcal{C}^B_T$ is $\mathcal{G}$-positive.
\end{proposition}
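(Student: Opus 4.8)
The plan is to imitate the argument given for inverse descent classes (Proposition \ref{prop : IDC}), but exploiting the descent preserving property of the Barbash and Vogan bijection in the first coordinate instead of the second. Write $\la = shape(T)$ for the common shape of the pair associated to any $\pi$ with $P_\pi = T$; by construction every element of $\mathcal{C}^B_T$ produces an image pair $(P_\pi, Q_\pi)$ of standard domino tableaux of this fixed shape $\la$.

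First I would recall that the Barbash and Vogan algorithm \cite{BarVog82} yields a bijection $\pi \mapsto (P_\pi, Q_\pi)$ between $B_n$ and ordered pairs of equal-shape standard domino tableaux, and that by Ta\c{s}kin's result (\cite[Prop. 26]{Tas12}) it satisfies $Des(\pi) = Des(Q_\pi)$. Restricting this bijection to the fibre $\{\pi \in B_n \mid P_\pi = T\} = \mathcal{C}^B_T$, the second coordinate $\pi \mapsto Q_\pi$ becomes a bijection from $\mathcal{C}^B_T$ onto $SDT(\la)$: it is injective because $\pi$ is fully determined by its image $(P_\pi, Q_\pi) = (T, Q_\pi)$, and surjective because for every $Q \in SDT(\la)$ the pair $(T, Q)$ has a unique preimage in $B_n$, which by definition lies in $\mathcal{C}^B_T$.

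Second, I would substitute the descent identity into the definition of $\mathcal{Q}$ and reindex the resulting sum by $Q$:
\begin{align*}
\mathcal{Q}(\mathcal{C}^B_T) = \sum_{\pi \in \mathcal{C}^B_T} F^B_{Des(\pi)} = \sum_{\substack{\pi \in B_n \\ P_\pi = T}} F^B_{Des(Q_\pi)} = \sum_{Q \in SDT(\la)} F^B_{Des(Q)}.
\end{align*}
By Equation (\ref{eq : GdX}) the right-hand side is exactly $\mathcal{G}_\la$, so that $\mathcal{Q}(\mathcal{C}^B_T) = \mathcal{G}_{shape(T)}$ is a single domino function and hence trivially $\mathcal{G}$-positive.

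I do not expect a genuine obstacle here: the only point requiring care is the surjectivity of $\pi \mapsto Q_\pi$ on the fibre, that is, verifying that we may fix the first coordinate of the Barbash and Vogan image to $T$ while the second coordinate still ranges over every standard domino tableau of shape $\la$. This follows at once from the bijectivity of the correspondence together with the fact that the two tableaux in an image pair always share a common shape, so no further combinatorial input beyond what is already stated in the excerpt is needed.
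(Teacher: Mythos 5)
Your proposal is correct and follows essentially the same route as the paper: fix the first tableau $P_\pi = T$, use the Barbash--Vogan bijection and Ta\c{s}kin's descent identity to reindex the sum over $Q \in SDT(shape(T))$, and identify the result with $\mathcal{G}_{shape(T)}$ via Equation (\ref{eq : GdX}). You merely spell out the surjectivity of $\pi \mapsto Q_\pi$ on the fibre, which the paper leaves implicit.
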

\begin{proof}
Compute $\mathcal{Q}(\mathcal{C}^B_T)$ in a similar fashion as in the proof of Proposition \ref{prop : IDC}.
\begin{align*}
\mathcal{Q}(\mathcal{C}_T) = 
\sum_{\substack{\pi \in B_n \\ P_{\pi} = T}} F_{Des(\pi)} =  
\sum_{Q \in SDT(shape(T))} F^B_{Des(Q)} = 
\mathcal{G}_{shape(T)}.
\end{align*}
That yields the desired result.
\end{proof}
The final example is a consequence of the previous ones.
We say that a permutation $\pi \in B_n$ is {\bf left-unimodal} if there exists an integer $i \in [n]$ such that 
$$\pi^{-1}(1) > \pi^{-1}(2) > \dots > \pi^{-1}(i) < \pi^{-1}(i+1) < \dots < \pi^{-1}(n).$$
\begin{proposition}
The set of left-unimodal signed permutations is $\mathcal{G}$-positive.  
\end{proposition}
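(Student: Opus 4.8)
The plan is to reduce this statement to the Inverse Descent Classes proposition by recognizing left-unimodality as a condition purely on $Des(\pi^{-1})$. First I would translate the defining inequality. Since the values $\pi^{-1}(1),\dots,\pi^{-1}(n)$ are pairwise distinct, the requirement that for some $i \in [n]$ one has $\pi^{-1}(1) > \dots > \pi^{-1}(i) < \dots < \pi^{-1}(n)$ simply says the sequence strictly decreases up to position $i$ and strictly increases afterwards. Recalling that for $1 \le j \le n-1$ we have $j \in Des(\pi^{-1})$ exactly when $\pi^{-1}(j) > \pi^{-1}(j+1)$, this ``valley'' shape holds if and only if $Des(\pi^{-1}) \cap [n-1]$ is an initial interval $\{1,2,\dots,i-1\}$ (empty when $i=1$).

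Next I would observe that the entry $0$ decouples entirely from the unimodality condition. Using the convention $\pi^{-1}(0)=0$, we have $0 \in Des(\pi^{-1})$ iff $\pi^{-1}(1) < 0$, which is independent of the relative order of $\pi^{-1}(1),\dots,\pi^{-1}(n)$. Hence a signed permutation is left-unimodal precisely when its inverse descent set belongs to the family
$$\mathcal{J} = \bigl\{\{1,\dots,i-1\},\ \{0\}\cup\{1,\dots,i-1\} \ : \ i \in [n]\bigr\},$$
and conversely every $\pi$ with $Des(\pi^{-1}) \in \mathcal{J}$ is left-unimodal. Consequently the set of left-unimodal signed permutations is the disjoint union $\bigsqcup_{J \in \mathcal{J}} D^{B,-1}_{n,J}$.

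Finally I would conclude by additivity. Because $\mathcal{Q}$ is additive over disjoint unions of subsets of $B_n$, the Chow quasisymmetric function of the left-unimodal set equals $\sum_{J \in \mathcal{J}} \mathcal{Q}(D^{B,-1}_{n,J})$, and by Proposition \ref{prop : IDC} each summand expands in domino functions with non-negative coefficients; a sum of such expansions is again non-negative, giving $\mathcal{G}$-positivity. The only real content lies in the first step: the dictionary between the left-unimodal inequality and the shape of $Des(\pi^{-1})$. The small point that needs care is verifying that position $0$ truly decouples, so that both signs of $\pi^{-1}(1)$ are permitted and each genuinely yields a left-unimodal permutation; once this is checked the result is an immediate corollary of the earlier propositions.
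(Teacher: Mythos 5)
Your proposal is correct and follows essentially the same route as the paper: characterise left-unimodal permutations as those whose inverse descent set is an initial interval of $[n-1]$ possibly together with $0$, write the set as a disjoint union of inverse descent classes, and conclude by additivity of $\mathcal{Q}$ together with Proposition \ref{prop : IDC}. Your treatment of the boundary case (the empty interval when $i=1$) and of the decoupling of position $0$ is in fact slightly more careful than the paper's own one-line argument.
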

\begin{proof}
A permutation $\pi$ is left-unimodal if and only if $Des(\pi^{-1}) = \{1,2,\dots,i\}$ or $Des(\pi^{-1}) = \{0, 1,2,\dots,i\}$ for some $i \in [n-1]$. As a result, the set of left-unimodal permutations is the union of inverse descent classes. As the definition of the type B quasisymmetric function of a set of permutations is additive, the result follows.	
\end{proof}
\section{Application to signed arc permutations}
\label{sec : arcB}
\subsection{Main theorem}
As stated in introduction, the set of arc permutations is a remarkable Schur positive set. We build a new bijection for the set of signed arc permutations and prove that it is type B Schur positive. 
\begin{defn}[Signed arc permutations]
\label{def : SAP}
A permutation $\pi \in B_n$ is called {\bf a signed arc permutation} if for $1\leq i\leq n$ the set 
\begin{enumerate}
\item $\{|\pi(1)|, |\pi(2)|, \dots |\pi(i-1)|\}$ is an interval in $\mathbb{Z}_{n}$ and
\item $\pi(i)>0$ if $|\pi(i)| - 1 \in \{|\pi(1)|, |\pi(2)|, \dots |\pi(i-1)|\}$ and \item $\pi(i)<0$ if $|\pi(i)| + 1 \in \{|\pi(1)|, |\pi(2)|, \dots |\pi(i-1)|\}$ (with the addition in $\mathbb{Z}_{n}$). 
\end{enumerate}
The set of signed arc permutations is denoted by $\mathcal{A}^s_n$. 
\end{defn}
\begin{remark}
As shown in \cite{EliRoi15}, signed arc permutations are exactly those permutations of $B_n$ that avoid the following 24 patterns:
$$[\pm 1, -2, \pm 3], [\pm 1, 3, \pm 2], [\pm 2, -3, \pm 1], [\pm 2, 1, \pm 3], [\pm 3, -1, \pm 2], [\pm 3, 2, \pm 1].$$ 
\end{remark}
\noindent The main result of this section follows.
\begin{theorem}
\label{thm:SAP}
The set of signed arc permutations $\mathcal{A}^s_n$ is $\mathcal{G}$-positive. Moreover,
\begin{align}
\label{eq:SAP}
\notag\sum_{\pi \in \mathcal{A}^s_{n}} F^B_{Des(\pi)} &= \mathcal{G}_{(2n)} + \mathcal{G}_{(2n-1, 1)} + \mathcal{G}_{(2n-2, 1, 1)} + \mathcal{G}_{(2n-3, 1, 1, 1)}+  2 \sum_{a \ge 2n-a \ge 2}  \mathcal{G}_{(a, 2n-a)}   \\
 &+ \sum_{a \ge 2n-a-2 \ge 2} \mathcal{G}_{(a, 2n-a-2, 2)} + \sum_{a \ge 2n-a-2 \ge 2}\mathcal{G}_{(a, 2n-a-2, 1, 1)}.
\end{align}
\end{theorem}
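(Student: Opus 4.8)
The plan is to deduce the identity, and with it $\mathcal{G}$-positivity, from a single descent-preserving bijection. By Equation~(\ref{eq : GdX}) every domino function on the right of~(\ref{eq:SAP}) expands as $\mathcal{G}_\la=\sum_{T\in SDT(\la)}F^B_{Des(T)}$, so the whole right-hand side equals $\sum_{T\in\mathcal{D}_n}F^B_{Des(T)}$, where $\mathcal{D}_n$ is the multiset of standard domino tableaux whose shapes and multiplicities are exactly those listed in~(\ref{eq:SAP}). It therefore suffices to build a bijection $\Phi\colon\mathcal{A}^s_n\to\mathcal{D}_n$ with $Des(\Phi(\pi))=Des(\pi)$ for every $\pi$: re-indexing then gives $\sum_{\pi\in\mathcal{A}^s_n}F^B_{Des(\pi)}=\sum_{\pi\in\mathcal{A}^s_n}F^B_{Des(\Phi(\pi))}=\sum_{T\in\mathcal{D}_n}F^B_{Des(T)}$, which is~(\ref{eq:SAP}), and no appeal to linear independence of the $F^B_I$ is needed.

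First I would encode the domain. Each $\pi\in\mathcal{A}^s_n$ is determined by its first letter $\pi(1)$ together with a word $w\in\{L,R\}^{n-1}$ recording, for $i=2,\dots,n$, whether $|\pi(i)|$ extends the current interval $\{|\pi(1)|,\dots,|\pi(i-1)|\}$ of absolute values to the left (forcing $\pi(i)<0$) or to the right (forcing $\pi(i)>0$); since every such pair is admissible one gets $|\mathcal{A}^s_n|=2n\cdot 2^{n-1}=n\,2^n$. I would then read off the descent set from this data: $0\in Des(\pi)$ exactly when $\pi(1)<0$, while for $i\ge 1$ the comparison $\pi(i)>\pi(i+1)$ is governed by the letters of $w$ at positions $i-1,i$ and by whether the growing interval has wrapped around $\mathbb{Z}_{n}$. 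The outcome should be a clean closed form for $Des(\pi)$ in terms of $(\pi(1),w)$, showing in particular that $Des(\pi)$ is always an interval or a small perturbation of one.

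Next I would enumerate the codomain family by family. The four ``thin'' shapes $(2n)$, $(2n-1,1)$, $(2n-2,1,1)$ and $(2n-3,1,1,1)$ admit forced (or nearly forced) domino tilings and a single standard filling each; the two-row shapes $(a,2n-a)$ and the ``fat hook'' shapes $(a,2n-a-2,2)$ and $(a,2n-a-2,1,1)$ also have very rigid tilings, so the possible descent sets of their standard fillings can be listed directly. Grouping $SDT(\la)$ by shape family, I would record exactly which descent sets arise and with what multiplicity, and check that $|\mathcal{D}_n|$ equals $n\,2^n$ as a first consistency test.

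Finally I would define $\Phi$ by matching classes of $(\pi(1),w)$ to shape families: monotone words $R\cdots R$ and $L\cdots L$ and their near-monotone variants map to the thin shapes, generic words map to the two-row shapes (the two admissible tilings of $(a,2n-a)$ accounting for the coefficient $2$), and the words whose interval closes up around the cycle map to the fat-hook shapes, with the split between $(a,2n-a-2,2)$ and $(a,2n-a-2,1,1)$ dictated by the sign pattern at the wrap-around. The main obstacle is verifying $Des(\Phi(\pi))=Des(\pi)$ uniformly: the delicate points are the position-$0$ descent, which is the genuinely type~B phenomenon and corresponds on the tableau side to whether the top-left domino is vertical, and the bookkeeping of descents across the cyclic wrap-around of the absolute-value interval, which is precisely where the multiplicity-$2$ two-row shapes and the two fat-hook families are produced. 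Confirming that the descent contributed (or suppressed) at the wrap-around matches the vertical/horizontal shape of the corresponding dominoes is the crux of the argument.
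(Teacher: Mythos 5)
Your overall strategy coincides with the paper's: reduce the identity to a descent-preserving bijection between $\mathcal{A}^s_n$ and the multiset of standard domino tableaux whose shapes (with multiplicity) index the right-hand side of (\ref{eq:SAP}), then conclude by Equation (\ref{eq : GdX}). That reduction is correct and, as you note, requires no linear-independence argument; your encoding of a signed arc permutation by $(\pi(1),w)$ with $|\mathcal{A}^s_n|=n2^n$ is also sound. The genuine gap is that the bijection $\Phi$ is never constructed: ``monotone words map to thin shapes, generic words to two-row shapes, wrap-around words to fat hooks'' is a rough classification of the domain, not a map, and you explicitly defer the verification of $Des(\Phi(\pi))=Des(\pi)$, which is where essentially all of the content of the theorem lies. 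Moreover, one intermediate step you rely on would fail: the descent set of a signed arc permutation is \emph{not} in general an interval or a small perturbation of one --- for the shuffled types it contains a descent at every transition from a positive to a negative entry, hence can be an essentially arbitrary sparse subset of $\{0\}\cup[n-1]$. The paper does the missing work by partitioning $\mathcal{A}^s_n$ into six types according to the structure of the positive and negative subsequences (Figure \ref{fig : SAPermutations}) and giving, for each type, explicit domino-insertion rules (Rules 1--4) together with an explicit inverse; the hardest case, Type 3, mapping onto the shapes $(a,2n-a-2,2)$, requires a seven-pattern case analysis with a table of exceptional insertion steps (Figure \ref{fig : ExceptionsTable}). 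Nothing in your sketch substitutes for that analysis.

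One point in your outline is not merely incomplete but wrong: you attribute the coefficient $2$ in front of $\sum_{a}\mathcal{G}_{(a,2n-a)}$ to ``the two admissible tilings of $(a,2n-a)$''. For fixed $a$ the set $SDT(a,2n-a)$ contains many tableaux realising many different tilings; the multiplicity $2$ instead records that two disjoint families of permutations --- Type 5, with negative values $\{\text{-}1,\dots,\text{-}k\}$, and Type 6, with negative values $\{\text{-}(k+1),\dots,\text{-}n\}$ --- each biject onto the \emph{same} full set $\bigcup_{a}SDT(a,2n-a)$, so every such tableau is hit exactly twice. Getting this attribution right is not cosmetic: deciding which permutations are sent to which shape family, and with what multiplicity, is precisely the bookkeeping your plan must pin down before the descent-preservation check can even be formulated.
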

\noindent To prove Theorem~\ref{thm:SAP} we introduce a new descent preserving bijective map from $\mathcal{A}^s_{n}$ to the sets of standard domino tableaux with shapes equal to the indices of the domino functions in formula~\ref{eq:SAP}.

\subsection{A new description of signed arc permutations}
Our bijective proof of Theorem~\ref{thm:SAP} relies on a characterisation of signed arc permutations as a shuffle of a positive and a negative subsequence. More precisely, a permutation $\pi \in B_n$ may be written as a sequence $$\pi = \pi_1 \pi_2 \dots \pi_n$$ where $\pi_i = \pi(i)$. We denote
\begin{align*}
\pi^+ &= a_1a_2\dots a_k\\
\pi^- &= b_1b_2\dots b_l
\end{align*}
the two subsequences of $\pi$ ($k+l = n$) composed respectively of the positive and negative integers in $\pi$. According to Definition \ref{def : SAP}, if $\pi$ is a signed arc permutation, then one has 
\begin{equation}
\label{eq : subsequences}
\begin{cases} 
\{|a_i|\}_{1\leq i \leq k }\cup\{|b_j|\}_{1\leq j \leq l } = [n]\\
a_{i+1} = a_{i}+1\mbox{ for } i=1\ldots k-1\\
b_{j+1} = b_{j}+1 \mbox{ for } j=1\ldots l-1\\
a_1 = -b_1 +1 \mbox{ if } kl \neq 0
\end{cases}
\end{equation}
where the $+$ sign denotes the addition in $\mathbb{Z}_n$ subject to the cyclic conditions $n+1 = 1$ and $\text{-}1+1 = \text{-}n$.
\begin{example}
The signed arc permutation in $B_7$, $\pi = \text{-}3\text{\phantom{-}}4\text{\phantom{-}}5\text{-}2\text{-}1\text{-}7\text{\phantom{-}}6$  is composed of the two subsequences
$\pi^+ = \text{\phantom{-}}4\text{\phantom{-}}5\text{\phantom{-}}6$ and 
$\pi^- = \text{-}3\text{-}2\text{-}1\text{-}7$
\end{example}
Given two sequences $\alpha$ and $\beta$ let $\alpha \shuffle \beta$ be the set of new sequences obtained by shuffling the letters of $\alpha$ and $\beta$ such that the initial order of the letters of $\alpha$ (resp. of $\beta$) is preserved. 
\begin{proposition}
The set of signed arc permutations $\mathcal{A}_n^s$ is equal to the disjoint union of the sets of sequences $\alpha \shuffle \beta$ where $(\al,\beta)$ runs over all the ordered pairs of positive and negative sequences fulfilling the conditions \ref{eq : subsequences}.
\end{proposition}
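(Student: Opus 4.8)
The plan is to prove the set equality by pairing each side with the positive/negative subsequence decomposition, and to treat the three obligations separately: the inclusion $\mathcal{A}_n^s \subseteq \bigcup_{(\alpha,\beta)} \alpha \shuffle \beta$, the reverse inclusion, and the disjointness of the union. The decomposition $\pi \mapsto (\pi^+,\pi^-)$ is the organizing device throughout, since it is exactly the structure encoded by conditions \ref{eq : subsequences}.

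First I would dispatch the forward inclusion. Given $\pi \in \mathcal{A}_n^s$, I extract its positive subsequence $\pi^+$ and negative subsequence $\pi^-$. The discussion preceding \ref{eq : subsequences} already shows that the pair $(\pi^+,\pi^-)$ satisfies the four stated conditions: the absolute values partition $[n]$, each of $\pi^+,\pi^-$ is a $+1$-run in $\mathbb{Z}_n$, and $a_1 = -b_1 + 1$ when both are nonempty. Since $\pi$ interleaves $\pi^+$ and $\pi^-$ while preserving the internal order of each, we have $\pi \in \pi^+ \shuffle \pi^-$, so $\pi$ lies in the union indexed by the valid pair $(\pi^+,\pi^-)$.

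The bulk of the work is the reverse inclusion. Fix a pair $(\alpha,\beta)$ with $\alpha = a_1\dots a_k$, $\beta = b_1\dots b_l$ satisfying \ref{eq : subsequences}, and take any $\sigma \in \alpha \shuffle \beta$. The key step is an invariant proved by induction on prefix length: after reading the first $p$ letters of $\alpha$ and the first $q$ letters of $\beta$, the set of absolute values read so far is exactly the cyclic interval $\{a_1 - q,\, \dots,\, a_1 + p - 1\}$ of $\mathbb{Z}_n$. In the inductive step, appending the next positive letter $a_{p+1} = a_1 + p$ extends this interval at its top, while appending the next negative letter $b_{q+1}$, whose absolute value is $a_1 - 1 - q$, extends it at its bottom; the relation $a_1 = |b_1| + 1$ is what glues the two runs into a single interval. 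This invariant yields condition 1 of Definition \ref{def : SAP} directly, and for conditions 2 and 3 I observe that a newly read positive entry has its lower neighbor present (the current top of the interval) while a newly read negative entry has its upper neighbor present (the current bottom), so each entry carries precisely the sign the definition prescribes.

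Finally I would settle disjointness: from any $\sigma$ in the union the indexing pair is recovered unambiguously as $(\sigma^+,\sigma^-)$, so two distinct pairs label disjoint shuffle sets and the union is genuinely disjoint. I expect the reverse inclusion to be the main obstacle, and within it the delicate point is the bookkeeping of the cyclic wraparound in $\mathbb{Z}_n$: one must track the interval invariant correctly when a run crosses the boundary between $n$ and $1$, and in particular verify conditions 2 and 3 at the final step that closes the interval into all of $\mathbb{Z}_n$, since that is the unique position where both neighbors of the incoming value are already present and one must confirm that the sign forced by the subsequence structure is the one consistent with the definition.
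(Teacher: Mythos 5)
Your proposal is correct and follows the same route as the paper, which simply declares the proposition a direct consequence of Definition \ref{def : SAP} and the positive/negative subsequence description; you have merely filled in the routine details (the two inclusions via the cyclic-interval invariant on prefixes, and disjointness via recovering $(\sigma^+,\sigma^-)$ from $\sigma$). The one delicate point you flag---the final entry, where both neighbours of $|\pi(n)|$ are already present---is an artefact of the over-strict literal phrasing of conditions 2 and 3 in Definition \ref{def : SAP} rather than a defect of your argument, and the paper glosses over it entirely.
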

\begin{proof}
This proposition is a direct consequence of Definition \ref{def : SAP} and the description of arc permutations in terms of positive and negative subsequences.
\end{proof}
\begin{example}
The set of signed arc permutation $\mathcal{A}_3^s$ is equal to
\begin{align*}
\mathcal{A}_3^s = &123 \cup 312 \cup 231\\
			  & \cup (12\shuffle\text{-}3)\cup (31\shuffle\text{-}2)\cup (23\shuffle\text{-}1)\\
			  &\cup(1\shuffle\text{-}3\text{-}2)\cup(2\shuffle\text{-}1\text{-}3)\cup(3\shuffle\text{-}2\text{-}1)\\
			  &\cup\text{-}3\text{-}2\text{-}1\cup\text{-}2\text{-}1\text{-}3\cup\text{-}1\text{-}3\text{-}2
\end{align*}
\end{example}
We have the following corollary.
\begin{corollary}
A signed arc permutation $\pi$ of $\mathcal{A}^s_n$ has a descent in position $i>0$ if and only if:
\begin{itemize}
\item $\pi_{i} > 0$ and $\pi_{i+1} < 0$ or
\item $\pi_{i} = n$ and $\pi_{i+1} = 1$ or
\item $\pi_{i} = \text{-}1$ and $\pi_{i+1} = \text{-}n$. 
\end{itemize}
\end{corollary}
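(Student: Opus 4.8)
The plan is to argue by a straightforward case analysis on the signs of the two consecutive entries $\pi_i$ and $\pi_{i+1}$, using the decomposition of $\pi$ into its positive subsequence $\pi^+ = a_1a_2\dots a_k$ and its negative subsequence $\pi^- = b_1b_2\dots b_l$ together with the conditions \ref{eq : subsequences}. Recall that by the type B definition of descents, $i>0$ is a descent of $\pi$ exactly when $\pi_i > \pi_{i+1}$, so everything reduces to comparing these two values in each configuration.

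First I would dispose of the two mixed-sign cases, which are immediate. If $\pi_i > 0$ and $\pi_{i+1} < 0$ then $\pi_i > 0 > \pi_{i+1}$, so $i$ is a descent; this is precisely the first listed condition. Conversely, if $\pi_i < 0$ and $\pi_{i+1} > 0$ then $\pi_i < 0 < \pi_{i+1}$, so $i$ is never a descent and this configuration contributes nothing.

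The substance lies in the two same-sign cases, where the arithmetic-progression structure of \ref{eq : subsequences} enters. Suppose $\pi_i$ and $\pi_{i+1}$ are both positive. Since no positive entry of $\pi$ sits strictly between positions $i$ and $i+1$, these two values are consecutive in $\pi^+$, say $\pi_i = a_j$ and $\pi_{i+1} = a_{j+1}$, and by \ref{eq : subsequences} we have $a_{j+1} = a_j + 1$ with addition in $\mathbb{Z}_n$. For values in $[n]$ this is ordinary integer addition, giving $\pi_i < \pi_{i+1}$ and no descent, unless the cyclic convention $n+1 = 1$ is invoked, i.e. $a_j = n$ and $a_{j+1} = 1$; this is exactly the second listed condition and does produce a descent. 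The negative case is symmetric: $\pi_i = b_j$ and $\pi_{i+1} = b_{j+1}$ are consecutive in $\pi^-$ with $b_{j+1} = b_j + 1$ in $\mathbb{Z}_n$, so the absolute values decrease by one and $\pi_i < \pi_{i+1}$, except when the convention $\text{-}1 + 1 = \text{-}n$ applies, i.e. $\pi_i = \text{-}1$ and $\pi_{i+1} = \text{-}n$, which is the third listed condition and yields a descent.

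Since the four sign configurations are exhaustive, collecting exactly the descent-producing subcases recovers the three conditions in the statement. I expect the only delicate point to be the consistent bookkeeping of the cyclic addition in \ref{eq : subsequences}: one must verify that the single wraparound $n \mapsto 1$ within $\pi^+$ (respectively $\text{-}1 \mapsto \text{-}n$ within $\pi^-$) is the unique mechanism by which a same-sign adjacency can create a descent, and confirm that these boundary cases genuinely occur rather than being ruled out by the interval constraints on $\{|a_i|\}$ and $\{|b_j|\}$.
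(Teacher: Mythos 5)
Your case analysis is correct and is exactly the argument the paper intends: the corollary is stated without proof as an immediate consequence of the subsequence description \eqref{eq : subsequences}, and your four sign-configurations with the two cyclic wraparounds $n\mapsto 1$ and $\text{-}1\mapsto\text{-}n$ are the intended justification. (The final worry about whether the boundary cases occur is not needed for the if-and-only-if statement itself, though they do occur, e.g.\ in types 1--3.)
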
  
We use these various properties to split the set of signed arc permutations into six non-overlapping types characterised by their positive and negative subsequences and the sign of their entries with absolute value $1$ and $n$. The six types are defined in the table of Figure~\ref{fig : SAPermutations} along with a graphical description indicating the positive and negative subsequences. 

\begin{figure} [h!]
\begin{center}
\begin{tabular}{| >{\centering\arraybackslash}m{3cm} |>{\centering\arraybackslash}m{9cm} |}
\hline
 \resizebox{!}{3cm}{
\begin{tikzpicture}[line cap=round,line join=round,>=triangle 45,x=1.0cm,y=1.0cm]
\draw [line width=1.pt] (0.,0.) circle (1.cm);

\foreach \x in {0,30,60,90,120,150,180,210,240,270,300,330}
\draw [fill=red] (\x:1cm) circle (1.2pt); 
\draw (180:1.2cm) node {$n$};
\draw (150:1.2cm) node {$1$};
\draw (120:1.2cm) node {$2$};

\draw (30:1.2cm) node {$k$};
\draw[thick] (0:0.8) arc (0:22:0.8);
\draw[thick, <-] (38:0.8) arc (38:360:0.8);
\draw (210:0.6cm) node {$+$};
\draw (0, -1.5) node {Type 1};
\end{tikzpicture}
}
 &  $\bigcup_{2 \leq k \leq n} \left \{\text{\phantom{-}}k\dots n \text{\phantom{-}}1 \dots k-1\right \} ~\bigcup~ 1 \dots n$\\  \hline
\resizebox{!}{3cm}{
\begin{tikzpicture}[line cap=round,line join=round,>=triangle 45,x=1.0cm,y=1.0cm]
\draw [line width=1.pt] (0.,0.) circle (1.cm);

\foreach \x in {0,30,60,90,120,150,180,210,240,270,300,330}
\draw [fill=red] (\x:1cm) circle (1.2pt); 
\draw (180:1.2cm) node {$n$};
\draw (150:1.2cm) node {$1$};
\draw (120:1.2cm) node {$2$};

\draw (30:1.2cm) node {$k$};
\draw[thick, ->] (0:0.8) arc (0:22:0.8);
\draw[thick] (38:0.8) arc (38:360:0.8);
\draw (210:0.6cm) node {$-$};
\draw (0, -1.5) node {Type 2};
\end{tikzpicture}
}
 &  $\bigcup_{2\leq k \leq n}\left \{\text{-}(k-1) \dots \text{-}1 \text{-}n \dots \text{-}k \right \} ~\bigcup~ \text{-}n \dots \text{-}1$  \\ \hline
 \resizebox{!}{3cm}{
\begin{tikzpicture}[line cap=round,line join=round,>=triangle 45,x=1.0cm,y=1.0cm]
\draw [line width=1.pt] (0.,0.) circle (1.cm);

\foreach \x in {0,30,60,90,120,150,180,210,240,270,300,330}
\draw [fill=red] (\x:1cm) circle (1.2pt); 
\draw (180:1.2cm) node {$n$};
\draw (150:1.2cm) node {$1$};
\draw (120:1.2cm) node {$2$};
\draw (30:1.2cm) node {$l$};
\draw (270:1.2cm) node {$k$};
\draw[thick, ->] (270:0.8) arc (270:360:0.8);
\draw[thick, ->] (240:0.8) arc (240:30:0.8);
\draw (300:0.6cm) node {$-$};
\draw (150:0.6cm) node {$+$};
\draw (0, -1.5) node {Type 3};
\end{tikzpicture}
} &  $\bigcup_{1 \leq l < k < n}\left \{ \text{-}k \text{-}(k-1) \dots \text{-}(l+1) ~\shuffle~ k+1 \dots n \text{\phantom{-}}1 \dots l\right \}$ \vspace{-8pt} \\ \hline
 \resizebox{!}{3cm}{
\begin{tikzpicture}[line cap=round,line join=round,>=triangle 45,x=1.0cm,y=1.0cm]
\draw [line width=1.pt] (0.,0.) circle (1.cm);

\foreach \x in {0,30,60,90,120,150,180,210,240,270,300,330}
\draw [fill=red] (\x:1cm) circle (1.2pt); 
\draw (180:1.2cm) node {$n$};
\draw (150:1.2cm) node {$1$};
\draw (120:1.2cm) node {$2$};
\draw (30:1.2cm) node {$k$};
\draw (270:1.2cm) node {$l$};
\draw[thick, <-] (270:0.8) arc (270:360:0.8);
\draw[thick, <-] (240:0.8) arc (240:30:0.8);
\draw (330:0.6cm) node {$+$};
\draw (150:0.6cm) node {$-$};
\draw (0, -1.5) node {Type 4};
\end{tikzpicture}
} & $\bigcup_{1 \leq k < l < n}\left \{ \text{-}k \text{-}(k-1) \dots \text{-}1 \text{-}n \dots \text{-}(l+1) ~\shuffle~ k+1\dots l \right \}$ \vspace{-8pt} \\ \hline
 \resizebox{!}{3cm}{
\begin{tikzpicture}[line cap=round,line join=round,>=triangle 45,x=1.0cm,y=1.0cm]
\draw [line width=1.pt] (0.,0.) circle (1.cm);

\foreach \x in {0,30,60,90,120,150,180,210,240,270,300,330}
\draw [fill=red] (\x:1cm) circle (1.2pt); 
\draw (180:1.2cm) node {$n$};
\draw (150:1.2cm) node {$1$};
\draw (120:1.2cm) node {$2$};
\draw (30:1.2cm) node {$k$};
\draw[thick, <-] (180:0.8) arc (180:360:0.8);
\draw[thick, <-] (150:0.8) arc (150:30:0.8);
\draw (270:0.6cm) node {$+$};
\draw (90:0.6cm) node {$-$};
\draw (0, -1.5) node {Type 5};
\end{tikzpicture}
} &  $\bigcup_{1 \leq k \leq n-1} \left \{ \text{-} k \text{-}(k-1) \dots \text{-}1 ~\shuffle~ k+1 \dots n \right \}$  \\ \hline
 \resizebox{!}{3cm}{
\begin{tikzpicture}[line cap=round,line join=round,>=triangle 45,x=1.0cm,y=1.0cm]
\draw [line width=1.pt] (0.,0.) circle (1.cm);

\foreach \x in {0,30,60,90,120,150,180,210,240,270,300,330}
\draw [fill=red] (\x:1cm) circle (1.2pt); 
\draw (180:1.2cm) node {$n$};
\draw (150:1.2cm) node {$1$};
\draw (120:1.2cm) node {$2$};
\draw (30:1.2cm) node {$k$};
\draw[thick, ->] (180:0.8) arc (180:360:0.8);
\draw[thick, ->] (150:0.8) arc (150:30:0.8);
\draw (270:0.6cm) node {$-$};
\draw (90:0.6cm) node {$+$};
\draw (0, -1.5) node {Type 6};
\end{tikzpicture}
} &  $\bigcup_{1 \leq k \leq n-1} \left \{ \text{-}n \dots \text{-}(k+1) ~\shuffle~ 1 \dots k \right \}$ \\ \hline
\end{tabular}
\end{center}
\caption{
Our six types of signed arc permutations. The first column provides a graphical representation of the positive and negative subsequences. The second column gives a formal description of the permutations of the given type. 
}
\label{fig : SAPermutations}
\end{figure}
\subsection{Explicit bijections for all types}
We build for each type of signed arc permutations an explicit bijection between the permutations it contains and a tractable set of standard domino tableaux. While we end up with six different bijections (one for each type), our constructions share a common set of rules. \\
Given a signed arc permutation $\pi = \pi_1 \pi_2 \dots\pi_n$ of $B_n$, we recursively build a standard domino tableau. At step $1\leq i \leq n$ we add a domino with label $i$ following one of the rules below. The selected rule depends on the type of $\pi$ and the value of $\pi(i)$. 
\begin{itemize}
\item (Rule 1)  Add a horizontal domino at the end of the first row. 
\item (Rule 2) Add either a horizontal domino at the end of the second row or a vertical domino across the first two rows. Exactly one of these two options is possible whenever we use Rule 2.
\item (Rule 3) Add either a horizontal domino at the end of the third row or a vertical domino across the rows number two and three. Exactly one of these two options is possible whenever we use Rule 3. 
\item (Rule 4) Add a vertical domino across the rows number three and four.
\end{itemize}
The next step is to use these four rules to build a specific bijection for each type of signed arc permutations.\\
We start with types 5 and 6 as they provide the best insight about our method.   
\begin{proposition}
\label{prop : 56}
Both type 5 and type 6 permutations are in descent preserving bijection with the set of standard domino tableaux of shapes $(a,2n-a)$ for $a \ge 2n-a \ge 2$. 
\end{proposition}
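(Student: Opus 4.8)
The plan is to strip both families down to their sign patterns and then reduce everything to a single bijection with two-row tableaux. First I would note that, by the Corollary, the descent set of a type 5 or type 6 permutation depends only on its sequence of signs: the configurations $\pi_i=n,\pi_{i+1}=1$ and $\pi_i=-1,\pi_{i+1}=-n$ never occur in these two types (the values $1$ and $-n$ are absent in type 5, the values $n$ and $-1$ in type 6), so writing $u=u_1\cdots u_n\in\{+,-\}^n$ for the string of signs of $\pi$ one has $0\in Des(\pi)$ iff $u_1=-$, and for $i\ge 1$, $i\in Des(\pi)$ iff $u_i=+$ and $u_{i+1}=-$. For a fixed type the positive and negative subsequences are increasing arithmetic runs whose lengths are fixed by the number of negative letters, so $\pi\mapsto u$ is a bijection from each of type 5 and type 6 onto the set $W_n$ of non-constant words of $\{+,-\}^n$, and it is descent preserving once $W_n$ carries the descent set just described. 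It therefore suffices to build one descent preserving bijection between $W_n$ and the standard domino tableaux of shapes $(a,2n-a)$ with $a\ge 2n-a\ge 2$, and to compose it with each of the two sign-pattern bijections.

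To construct the map $W_n\to SDT$, I would scan $u$ from left to right and add the $i$-th domino by Rule 1 when $u_i=+$ and by Rule 2 when $u_i=-$; since the target shapes have only two rows, Rules 3 and 4 are never invoked. The structural lemma is that, writing the running shape as $(r_1,r_2)$, the difference $r_1-r_2$ stays even: Rule 1 changes it by $+2$, a horizontal Rule-2 domino by $-2$, a vertical Rule-2 domino by $0$. Hence a vertical domino fits exactly when $r_1=r_2$ and a horizontal domino in row 2 exactly when $r_1\ge r_2+2$, so precisely one Rule-2 option is legal at each step, which settles well-definedness. The inverse is immediate: reading dominoes $1,\dots,n$ in order, a horizontal domino lying in row 1 marks $u_i=+$ and every other domino marks $u_i=-$, the Rule-2 subchoice being recovered from the forcing above. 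Thus the \emph{unrestricted} assignment is a bijection from all of $\{+,-\}^n$ onto the SDT that are at most two rows high, the two constant words mapping to the shape $(2n)$ and to the all-vertical tableau of shape $(n,n)$. The one blemish is the single non-constant word $-,+,\dots,+$, which the rules send to the excluded shape $(2n-1,1)$; I would reroute it to the all-vertical tableau of shape $(n,n)$, after which $W_n$ maps bijectively onto exactly the shapes $(a,2n-a)$ with $a\ge 2n-a\ge 2$.

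The heart of the matter, and the step I expect to be the main obstacle, is descent preservation. Position $0$ is easy: $u_1=-$ iff the first domino is added by Rule 2 on the empty shape, which is forced vertical, matching the convention $0\in Des(T)$ iff domino $1$ is vertical. For $i\ge 1$ I would prove, by induction on the growth of the tableau, the equivalence ``$u_i=+$ and $u_{i+1}=-$'' iff ``domino $i$ lies in row 1 and domino $i+1$ lies in row 2''. The difficulty is precisely that whether a Rule-2 domino occupies row 2 (horizontal) or row 1 (vertical) is not determined by $u_i$ alone but by the current value of $r_1-r_2$, so the inductive hypothesis must track the running shape and the genuinely delicate cases are maximal blocks of consecutive negative letters, where vertical and horizontal placements alternate. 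I would then check that the reroute respects descents, which it does since both the word $-,+,\dots,+$ and the all-vertical tableau of shape $(n,n)$ have descent set $\{0\}$. Because the type 6 sign patterns produce the same word set $W_n$ with the same descent statistic, the identical argument yields the bijection for type 6, completing the proof.
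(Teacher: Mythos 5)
Your proposal is correct and follows essentially the same route as the paper: the paper likewise places domino $i$ by Rule 1 or Rule 2 according to the sign of $\pi_i$ and special-cases the permutation $\text{-}1\,2\cdots n$ (your word $-+\cdots+$) by sending it to the all-vertical tableau of shape $(n,n)$. Your explicit factoring through sign words, the parity argument for well-definedness of Rule 2, and the case analysis for descent preservation are simply a more detailed write-up of what the paper asserts as ``clearly bijective and descent preserving.''
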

\begin{proof}
We give the proof for type 5 permutations but the same reasoning applies to type~6. Type 5 contains the permutations 
$$\bigcup_{1 \leq k \leq n-1} \{\text{-} k \text{-}(k-1) \dots \text{-}1 ~\shuffle~ k+1 \dots n\}.$$
\begin{itemize}
\item Firstly,  map $\pi^0 = \text{-}1\,\,2\dots n$ to the standard domino tableau $T^0$ composed of $n$ vertical dominoes. We have $Des(\pi^0) = Des(T^0)=\{0\}$. 
\item Secondly, let $\pi = \pi_1\pi_2\dots\pi_n \neq \pi^0$. We build recursively a two-row standard domino tableau. At step $1\leq i \leq n$ we add a domino with label $i$ according to Rule 1 (resp. Rule 2) if $\pi_i > 0$ (resp. $\pi_i < 0$). 
\end{itemize}
This mapping is clearly bijective and descent preserving. Indeed, given a standard domino tableau $T$ of shape $(a,2n-a)$, the combined number of horizontal dominoes in the second row and vertical dominoes gives the integer $k$ and subsequently the positive and negative subsequences (that only depend on $k$). Then the position of domino $i$ gives the sign of the $i$-th entry of a preimage of $T$ (positive if $i$ is horizontal on the first row and negative if $i$ is horizontal on the second row or vertical). We recover the particular shuffle of the two subsequences and a unique preimage of $T$.\\ Finally, there is a descent in position $i>0$ in the domino tableau $T$ of preimage $\pi$ if and only if $i$ is in the first row and $i+1$ in the second row, i.e. if and only if $\pi_i >0$ and $\pi_{i+1} < 0$ i.e. if and only if $\pi_i > 0 > \pi_{i+1}$. There is a descent in $0$ in $T$ if and only if $\pi_1 < 0$. As a result the bijection is descent preserving.  
\end{proof}
\begin{example}
The type 5 signed arc permutation $\pi = \text{-}5\,\,6\, \text{-}4\,\,7\, \text{-}3\, \text{-}2\, \text{-}1$ is mapped to the following standard domino tableau:
\[
\begin{matrix}
\resizebox{!}{1.24 cm}{%
\begin{tikzpicture}[node distance=0 cm, outer sep = 0pt, line width=1pt]
        \node[bver] (1) at ( 0,   -0.5) {\bf \Large 1};
        \node[bhor] (2) at (1.5, 0) {\bf \Large 2};
        \node[bhor] (3) [below = of 2] {\bf \Large 3};
        \node[bhor] (4) [right = of 2] {\bf \Large 4};
        \node[bhor] (5) [below = of 4] {\bf \Large 5};
        \node[bver] (6) at ( 5,   -0.5) {\bf \Large 6};    
	\node[bver] (7) [right = of 6] {\bf \Large 7};
\end{tikzpicture}  
} \end{matrix}
\]
\end{example}
The following bijections are variations of the one in Proposition \ref{prop : 56}.
\begin{proposition}
\label{prop : 1}
Type 1 signed arc permutations are in descent preserving bijection with the set of standard domino tableaux of shapes $(2n)$ and $(2n-2, 1, 1)$.  
\end{proposition}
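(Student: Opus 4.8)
The plan is to adapt the recursive construction of the proof of Proposition~\ref{prop : 56} to the all-positive permutations of Type~1, replacing the sign-based choice of rule by a choice triggered by the unique cyclic descent. First I would enumerate Type~1 explicitly: these are the $n$ cyclic words $\pi^{(k)} = k\,(k+1)\cdots n\,1\cdots(k-1)$ for $1 \le k \le n$, with $\pi^{(1)}$ the identity. Reading descents off directly, $Des(\pi^{(1)}) = \emptyset$ and $Des(\pi^{(k)}) = \{n-k+1\}$ for $k \ge 2$: there is never a descent in position $0$ since all entries are positive, and the only positive descent occurs at the wraparound where the value $n$ is immediately followed by $1$, namely at position $n-k+1$.

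Next I would describe the two target families and pin down their tilings. The shape $(2n)$ is a single row and admits the unique tiling by $n$ horizontal dominoes labelled $1,\dots,n$; this tableau has empty descent set. For $(2n-2,1,1)$ the first column has three cells, so it cannot be covered by vertical dominoes alone, and the only legal tiling forces one vertical domino in column $1$ across rows $2$ and $3$ together with $n-1$ horizontal dominoes filling row~$1$. The standard fillings are then parametrised by the label $v$ of the vertical domino, and the column-strictness condition $1 < v$ shows that $v$ ranges exactly over $\{2,\dots,n\}$, yielding $n-1$ tableaux. A short computation gives the descent set of such a tableau: no descent in position $0$ because the domino $1$ is horizontal, and a single descent at position $v-1$, since domino $v$ (top cell in row $2$) lies strictly below domino $v-1$ in row~$1$, whereas domino $v+1$ sits in row~$1$ above it.

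Then I would define the bijection through the rules of the previous subsection: process $\pi^{(k)}$ from left to right, applying Rule~1 at every step except at the unique step $i_0$ at which $\pi_{i_0}=1$ with $i_0>1$, where I apply the vertical variant of Rule~3 (a vertical domino across rows $2$ and $3$). For the identity every step uses Rule~1, producing the shape-$(2n)$ tableau; for $k \ge 2$ the single Rule~3 insertion at step $i_0 = n-k+2$ produces the shape-$(2n-2,1,1)$ tableau whose vertical domino carries the label $v = n-k+2$. Matching the two descent computations gives $v-1 = n-k+1$, so the map is descent-preserving, and invertibility is immediate: the shape tells whether $\pi$ is the identity, and in the $(2n-2,1,1)$ case the label $v$ of the forced vertical domino recovers $k = n-v+2$ and hence $\pi^{(k)}$. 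The count $1 + (n-1) = n$ on both sides confirms the bijection.

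I do not expect a single hard step, but the care points are the bookkeeping that every intermediate diagram produced by the recursion is a valid standard domino tableau (in particular that inserting the vertical domino across rows $2$ and $3$ at step $i_0$ keeps the shape a partition and preserves column-strictness), and making the position-$0$ descent analysis airtight under the paper's convention. Both are routine here precisely because every entry is positive and exactly one non-Rule-1 insertion ever occurs.
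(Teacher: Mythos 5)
Your proposal is correct and follows essentially the same route as the paper: the identity is sent to the unique tableau of shape $(2n)$, and every other Type 1 permutation is processed with Rule 1 at each step except at the position of the entry $1$, where the vertical domino across rows $2$ and $3$ is placed, with the descent at position $n-k+1$ matched on both sides. Your explicit enumeration of the $(2n-2,1,1)$ tableaux and the identity $v=n-k+2$ is just a more detailed rendering of the paper's observation that $n-i+1=k-1$.
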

\begin{proof}
Type 1 contains the permutations
$$\bigcup_{2 \leq k \leq n} \{k \dots n \phantom{\text{-}}1 \dots k-1\} ~\bigcup~ 1 \dots n.$$
We use the following bijection. First map the permutation $\pi^1 = 1\,2\dots n$ to the unique standard domino tableau $T^1$ of shape $(2n)$. Obviously $Des(T^1) = Des (\pi^1) = \emptyset$. 
Then for $\pi \neq \pi^1$ use the same mapping as for type 5 signed arc permutations except for index $i$ with $\pi_i = 1,~ i>1$. In the latter case apply Rule 3.\\
This construction is clearly bijective. Indeed, type 1 signed arc permutations are determined by the value of the integer $k$. But if $i$ is the index of the vertical domino across the second and the third row of a standard domino tableau of shape $(2n-2, 1, 1)$, then $n-i+1 = k-1$. Finally, the descent set is preserved. Assume $\pi \neq \pi^1$ is a type 1 signed arc permutation mapped to the standard domino tableau $T$.   Let $i>1$ such that $\pi_i = 1$. We have $Des(\pi) = \{i-1\}$. As we apply Rule 3 for index $i$ in our mapping and Rule 1 for all $j\neq i$, only the domino with label $i$ in $T$ is strictly below the others. As a result $Des(T) = \{i-1\} = Des (\pi)$.
\end{proof}
\begin{example}
The signed arc permutation $\pi = 4567123$ is mapped to the following standard domino tableau: 
\[
\begin{matrix}
\resizebox{!}{1.8 cm}{%
\begin{tikzpicture}[node distance=0 cm, outer sep = 0pt, line width=1pt]
        \node[bhor] (1) at (0, 0) {\bf \Large 1};
        \node[bhor] (2) [right = of 1] {\bf \Large 2};
        \node[bhor] (3) [right = of 2] {\bf \Large 3};
        \node[bhor] (4) [right = of 3] {\bf \Large 4};
        \node[bver] (5) at (-0.5, -1.5) {\bf \Large 5};
        \node[bhor] (6) [right = of 4] {\bf \Large 6};    
	\node[bhor] (7) [right = of 6] {\bf \Large 7};
\end{tikzpicture}  
}
\end{matrix} \]
\end{example}
\begin{proposition}
\label{prop : 2}
Type 2 signed arc permutations are in descent preserving bijection with the set of standard domino tableaux of shapes $(2n-1, 1)$ and $(2n-3, 1, 1, 1)$. 
\end{proposition}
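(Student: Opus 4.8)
The plan is to follow the template of Proposition~\ref{prop : 1} for Type~1, modified to reflect that Type~2 permutations are entirely negative, so that $\pi_1<0$ and hence $0\in Des(\pi)$ in every case. I would first read off the descent sets directly from the list in Figure~\ref{fig : SAPermutations}. The permutation $\text{-}n\dots\text{-}1$ is increasing apart from the global sign, so its only descent is at position $0$. Each of the remaining permutations $\text{-}(k-1)\dots\text{-}1\,\text{-}n\dots\text{-}k$ with $2\le k\le n$ has exactly one further descent, at position $k-1$, where the entry $\text{-}1$ is immediately followed by $\text{-}n$. Hence Type~2 realises precisely the descent sets $\{0\}$ and $\{0,k-1\}$ for $2\le k\le n$, a total of $n$ permutations.

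Next I would enumerate the target tableaux. Since rows $2,3,4$ of both shapes occupy only the first column, the domino tilings are forced: $(2n-1,1)$ is one vertical domino in column~$1$ plus $n-1$ horizontal dominoes filling the first row, while $(2n-3,1,1,1)$ has two vertical dominoes stacked in column~$1$ (across rows $1$--$2$ and rows $3$--$4$) plus $n-2$ horizontal dominoes in the first row. Row- and column-strictness then force label~$1$ onto the top vertical domino of each shape, so $(2n-1,1)$ has a unique standard domino tableau and the tableaux of shape $(2n-3,1,1,1)$ are indexed by the label $d\in\{2,\dots,n\}$ of their lower vertical domino. As domino~$1$ is vertical in both shapes, every such tableau has $0$ in its descent set; applying the rule that $i>0$ is a descent exactly when domino $i+1$ lies strictly below domino~$i$, the $(2n-1,1)$ tableau has descent set $\{0\}$ and the $(2n-3,1,1,1)$ tableau with lower domino $d$ has descent set $\{0,d-1\}$. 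This matches the previous list under $d=k$.

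With the descent data aligned, I would present the bijection via the four rules. Send $\text{-}n\dots\text{-}1$ to the unique tableau of shape $(2n-1,1)$. For every other permutation apply Rule~2 at step~$1$, which is forced to create the vertical domino labelled~$1$ across rows $1$--$2$ and accounts for the descent at $0$; apply Rule~1 at each step $i>1$ with $\pi_i\neq\text{-}n$; and apply Rule~4 at the single step $i=k$ with $\pi_i=\text{-}n$, creating the vertical domino across rows $3$--$4$ of column~$1$. One verifies this produces a valid standard domino tableau of shape $(2n-3,1,1,1)$ whose lower vertical domino carries label~$k$, so the construction is a bijection onto the tableaux enumerated above and carries $Des(\pi)=\{0,k-1\}$ to the tableau descent set $\{0,k-1\}$.

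I expect the main difficulty to be organisational rather than conceptual: one must check that the prescribed rule is legal at each step -- in particular that Rule~2 at step~$1$ is forced to be vertical (a horizontal domino in row~$2$ being impossible) and that Rule~4 legitimately places a domino across rows $3$--$4$ of column~$1$ once the first vertical domino is present -- and that reading a tableau back recovers a unique preimage. The genuinely type-B point to watch is the position-$0$ descent: because domino~$1$ is vertical for every Type~2 permutation, the secondary vertical domino is pushed down to rows $3$--$4$ via Rule~4, rather than to rows $2$--$3$ via Rule~3 as in Proposition~\ref{prop : 1}.
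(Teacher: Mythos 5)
Your proposal is correct and matches the paper's proof essentially verbatim: the exceptional permutation $\text{-}n\dots\text{-}1$ is sent to the unique tableau of shape $(2n-1,1)$, and every other Type~2 permutation gets a forced vertical domino labelled $1$, Rule~1 at all steps with $\pi_i\neq\text{-}n$, and Rule~4 at the single step $i=k$ with $\pi_i=\text{-}n$, yielding descent set $\{0,k-1\}$ on both sides. Your explicit enumeration of the standard domino tableaux of the two shapes and their descent sets is a slightly more detailed justification of the bijectivity claim than the paper gives, but it is the same argument.
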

\begin{proof}
Type 2 contains the permutations 
$$\bigcup_{2 \leq k \leq n} \{\text{-}(k-1) \dots \text{-}1 \text{-}n \dots \text{-}k\} ~\bigcup~ \{\text{-}n \dots \text{-}1\}.$$
For type 2 we use the following mapping. We map the permutation $\pi^2 = \text{-n} \dots \,\,\text{-}2 \text{-}1$ to the unique standard domino tableau $T^2$ of shape $(2n-1, 1)$. As a result, $Des(\pi^2) = Des(T^2) = \{0\}$. Let $\pi = \pi_1\pi_2\dots\pi_n \ne \pi^2$. We map $\pi$ to the standard domino tableau $T$ built according to the following procedure.
\begin{itemize}
\item The domino labelled $1$ is in vertical position. 
\item Then, apply Rule 1 for all $i$ such that $\pi_i \neq \text{-}n$.
\item Finally, apply Rule 4 for $i$ such that $\pi_i =\text{-}n$. 
\end{itemize}
A type 2 signed arc permutation $\pi \neq \pi^2$ with $\pi_i = \text{-}n$ ($i>1$) has descent set $Des(\pi) =\{0, i-1\}$. As a result, the construction is bijective and descent preserving. Indeed, if $i$ is the label of the vertical domino across the third and fourth row in a standard domino tableau $T$ of shape $(2n-3,1,1,1)$, then set $i = k$ to recover the unique preimage of $T$ by our construction. Furthermore, $Des(T) = \{0, i-1\}$. 
\end{proof}
\begin{example}
The signed arc permutation $\pi = \text{-}3\text{-}2\text{-}1\text{-}7\text{-}6\text{-}5\text{-}4$ is mapped to the following standard domino tableau: 
\[
\begin{matrix}
\resizebox{!}{2.4 cm}{%
\begin{tikzpicture}[node distance=0 cm, outer sep = 0pt, line width=1pt]
        \node[bver] (1) at (-0.5, -0.5) {\bf \Large 1};
        \node[bhor] (2) at (1, 0) {\bf \Large 2};
        \node[bhor] (3) [right = of 2] {\bf \Large 3};
        \node[bver] (4) [below = of 1] {\bf \Large 4};
        \node[bhor] (5) [right = of 3] {\bf \Large 5};
        \node[bhor] (6) [right = of 5] {\bf \Large 6};    
	\node[bhor] (7) [right = of 6] {\bf \Large 7};
\end{tikzpicture}  
}
\end{matrix} \]
\end{example}

\begin{proposition}
\label{prop : 4}
Type 4 signed arc permutations are in descent preserving bijection with standard domino tableaux of shape $(a,2n-a-2,1,1)$ for $a \ge 2n-a-2 \ge 2$. 
\end{proposition}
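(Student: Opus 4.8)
The plan is to follow the explicit constructions of Propositions~\ref{prop : 56}, \ref{prop : 1} and \ref{prop : 2}. I would parametrise a type~4 permutation by $(k,l)$ with $1\le k<l<n$ together with the shuffle of its three runs: the negative run $A=\text{-}k\dots\text{-}1$, the positive run $P=k+1\dots l$ and the negative run $B=\text{-}n\dots\text{-}(l+1)$. The target shape $(a,2n-a-2,1,1)$ is a two-row region $(a,2n-a-2)$ carrying one extra vertical domino in rows $3$ and $4$ of the first column, so the natural assignment is to read $\pi$ from left to right and insert the domino labelled $i$ as follows: apply Rule~4 to the entry $\pi_i=\text{-}n$ (this creates the rows $3,4$ domino), Rule~1 to every positive entry (extending row~$1$), and Rule~2 to every remaining negative entry. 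As in Proposition~\ref{prop : 56}, the difference $\mathrm{row}_1-\mathrm{row}_2$ stays even throughout, so the forced choice in Rule~2 is unambiguous: a horizontal domino is appended to row~$2$ when row~$1$ is strictly longer, and a vertical domino is placed across rows~$1$ and $2$ when the two rows are level.

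I would first verify that this generic assignment is a descent-preserving bijection onto the relevant tableaux. For injectivity and surjectivity, the label of the unique rows $3,4$ domino recovers the position of $\text{-}n$, and the number of dominoes meeting row~$2$ together with the positions of the vertical dominoes recovers $(k,l)$ and the interleaving of the three runs, exactly as the combined count of second-row and vertical dominoes recovered $k$ in Proposition~\ref{prop : 56}. Descent preservation is then a routine check against the corollary that characterises the descents of a type~4 permutation, using the convention (consistent with Propositions~\ref{prop : 56}, \ref{prop : 1} and \ref{prop : 2}) that $i$ is a descent of a domino tableau exactly when the lowest row occupied by the domino $i+1$ lies strictly below that of $i$: a descent at $0$ corresponds to a vertical first domino ($\pi_1<0$); a positive-to-negative transition $\pi_i>0>\pi_{i+1}$ gives a Rule-1 domino in row~$1$ followed by a Rule-2 or Rule-4 domino lower down; and the transition $\pi_i=\text{-}1,\pi_{i+1}=\text{-}n$ matches the jump from a second-row domino to the rows $3,4$ domino.

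The hard part, and the reason this case is more delicate than types $1,2,5,6$, is a boundary phenomenon that I expect to be the main obstacle. The generic rule yields a genuine second row, $2n-a-2\ge 2$, for all of type~4 \emph{except} the family $(k,l)=(1,n-1)$ in which $A=\{\text{-}1\}$, $B=\{\text{-}n\}$ and $\text{-}1$ is the first letter of $\pi$; there the single Rule-2 letter $\text{-}1$ is forced vertical, row~$2$ receives only one cell, and the output is the degenerate shape $(2n-3,1,1,1)$, which lies outside the claimed range and in fact coincides with a shape occurring in Proposition~\ref{prop : 2}. I would handle these $n-1$ permutations by a modified insertion in which the positive letters are also inserted by Rule~2, so that they stack as vertical dominoes across rows~$1$ and $2$; this sends them to the $n-1$ standard domino tableaux of the square shape $(n-1,n-1,1,1)$ whose first two rows consist entirely of vertical dominoes, with the label of the rows $3,4$ domino again recording the position of $\text{-}n$. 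The crux is then to show that these exceptional tableaux are exactly the ones missed by the generic rule, which it never produces because $P\neq\emptyset$ forces at least one horizontal domino in row~$1$, and that the two prescriptions together form a single descent-preserving bijection onto $\bigcup_{a\ge 2n-a-2\ge 2}SDT(a,2n-a-2,1,1)$. Establishing this disjointness-and-covering, rather than the individual descent computations, is where the real work lies.
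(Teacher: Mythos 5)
Your generic construction is exactly the paper's: Rule~1 on positive letters, Rule~2 on the negative letters other than $\text{-}n$, Rule~4 on $\text{-}n$, with the same inverse (the label of the rows $3$--$4$ domino locates $\text{-}n$, and the Rule~2 dominoes with smaller and larger labels give $k$ and $n-l-1$). Where you genuinely depart from the paper is the boundary family you isolate, and you are right to do so. The paper's proof only observes that row~$2$ has length at least $1$ when Rule~4 is applied and never discusses the case $(k,l)=(1,n-1)$ with $\pi_1=\text{-}1$; for those $n-1$ permutations (e.g.\ $\text{-}1\ 2\ \text{-}3$ and $\text{-}1\ \text{-}3\ 2$ for $n=3$) the rules as printed produce the shape $(2n-3,1,1,1)$, which lies outside the stated range and is already the image of type~2 under Proposition~\ref{prop : 2}, so the unmodified map is neither onto the claimed target nor compatible with the global decomposition of Theorem~\ref{thm:SAP}.

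Your repair is correct and is the natural analogue of the special treatment of $\pi^0$ in the type~5 proof: sending these $n-1$ permutations to the all-vertical tableaux of shape $(n-1,n-1,1,1)$ matches descent sets (both sides give $\{0,j-1\}$ where $j$ is the position of $\text{-}n$), and the disjointness-and-covering you flag as the crux does hold --- a tableau of shape $(a,2n-a-2,1,1)$ with no horizontal domino in row~$1$ forces $a=2n-a-2=n-1$ with all first-two-row dominoes vertical, while the generic rule always creates at least one horizontal row-$1$ domino since $l>k$. So your write-up is essentially the paper's argument plus a patch the paper actually needs; the only cost is the additional case analysis, and what it buys is a proof that is actually complete on the degenerate family.
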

\begin{proof}
Type 4 contains the permutations 
$$\bigcup_{1 \leq k < l < n} \{\text{-}k \text{-}(k-1) \dots \text{-}1\, \text{-}n \dots \text{-}(l+1) ~\shuffle~ k+1 \dots l\}.$$
We use the same mapping as for type 5 signed arc permutations except for step $i$ such that $\pi_i = \text{-}n,~ i>1$, when we apply Rule 4. As $\text{-}n$ is never the first negative number in $\pi$, the second row of the tableau is always of length greater or equal to $1$ when we apply Rule 4. As a consequence, Rule 4 application is always possible for a valid type 4 permutation.\\
To prove the bijection, let $i$ be the index of the vertical domino across the third and the fourth row in a standard domino tableau $T$ of shape $(a,2n-2-a,1,1)$. Set $k$ to be equal to the number of horizontal dominoes in the second row and vertical dominoes (Rule 2 dominoes) with label strictly less than $i$. Then set $n-l-1$ to be the number of Rule 2 dominoes with index greater than $i$. We recover the positive and negative subsequences of the preimage $\pi$ of $T$. Finally, the position of domino $i$ in $T$ gives the sign of $\pi_i$. Iterating for all $i$ gives the unique shuffle of the two subsequences mapped to $T$ by our construction. Further the descent sets of $\pi$ and $T$ are equal following a similar remark as for Type 5 permutations.  
\end{proof}
\begin{example}
The signed arc permutation $\pi = \text{-}3\,\phantom{\text{-}}4\,\text{-}2\,\text{-}1\,\text{-}7\,\phantom{\text{-}}5\,\text{-}6$ is mapped to the following standard domino tableau: 
\[
\begin{matrix}
\resizebox{!}{2.4 cm}{%
\begin{tikzpicture}[node distance=0 cm, outer sep = 0pt, line width=1pt]
        \node[bver] (1) at (-0.5, -0.5) {\bf \Large 1};
        \node[bhor] (2) at (1, 0) {\bf \Large 2};
        \node[bhor] (3) [below = of 2] {\bf \Large 3};
        \node[bver] (4) at (2.5, -0.5) {\bf \Large 4};
        \node[bver] (5) [below = of 1] {\bf \Large 5};
        \node[bhor] (6) at (4, 0) {\bf \Large 6};    
	\node[bhor] (7) [below = of 6] {\bf \Large 7};
\end{tikzpicture}  
}
\end{matrix} \]
\end{example}

The type 3 case is the most intricate as the descent patterns involved are more complicated. We proceed with the sixth and last bijection.  
\begin{proposition}
\label{prop : 3}
Type 3 signed arc permutations are in descent preserving bijection with standard domino tableaux of shapes $(a,2n-a-2,2)$ for $a \ge 2n-a-2 \ge 2$. 
\end{proposition}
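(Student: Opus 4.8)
\section*{Proof proposal}

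The plan is to push the recursive domino--filling procedure of Proposition~\ref{prop : 56} one level deeper. Recall that a type 3 permutation is a shuffle of $\pi^-=\text{-}k\,\text{-}(k-1)\dots\text{-}(l+1)$ and $\pi^+=k+1\dots n\,1\dots l$ with $1\le l<k<n$; its absolute values split into three consecutive cyclic blocks, namely the low positives $\{1,\dots,l\}$, the negatives $\{l+1,\dots,k\}$ (in absolute value) and the high positives $\{k+1,\dots,n\}$. By the corollary above, the descents of such a $\pi$ sit exactly at every position where a positive letter is immediately followed by a negative one, at the wrap $n\to 1$ whenever $n$ and $1$ are adjacent, and at position $0$ when $\pi_1<0$. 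The target shapes $(a,2n-a-2,2)$ with $a\ge 2n-a-2\ge 2$ differ from the two--row shapes of types 5 and 6 by a third row of length exactly $2$, and the guiding idea is that this extra row is the geometric trace of the second positive arc $1\dots l$ reappearing after the maximal letter $n$: a single descent can now require dropping by two rows instead of one, and this is what opens row three.

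The construction reads $\pi$ from left to right and appends domino $i$ by one of the four rules, the choice being governed by whether $\pi_i$ lies in the low arc, the negative arc or the high arc and by the shape built so far. The top two rows are produced essentially as in type 5, each positive--to--negative step becoming a drop to a strictly lower row; the distinctive feature of type 3 is that the return of small positive values after the maximal letter $n$ may force a descent to drop by two rows, realised through Rule 3 and creating the third row. Because row three has length exactly $2$, it is filled either by one horizontal Rule 3 domino or by a forced pair of vertical Rule 3 dominoes across rows two and three, the alternative being dictated by the current second-- and third--row lengths. Contrary to types 1, 2 and 4 (Propositions~\ref{prop : 1}, \ref{prop : 2} and~\ref{prop : 4}), where a single distinguished letter carries the special rule, the rule attached to a given letter here depends on the positions of the negatives relative to $n$ and $1$; pinning down this assignment so that the procedure stays well defined is the technical heart of the construction.

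The argument would then run in four steps. (i) \emph{Well-definedness}: at every step exactly one of the horizontal/vertical options of the invoked rule keeps the partial shape a legal domino diagram; the delicate point is to show, by tracking the parities of consecutive row--length differences, that Rules 2 and 3 are never ambiguous, with special care at the step that opens row three. (ii) \emph{Shape}: counting the dominoes contributed to each row in terms of $k$ and $l$ shows the final shape is always some $(a,2n-a-2,2)$ with $a\ge 2n-a-2\ge 2$, and conversely that every such tableau is attained. (iii) \emph{Inverse}: from a tableau one recovers $k$ and $l$ from the row lengths and from the labels of the dominoes meeting the third row, then reads the sign of $\pi_i$ off the row occupied by domino $i$, reconstructing the shuffle. (iv) \emph{Descent preservation}: as in Propositions~\ref{prop : 56} and~\ref{prop : 4}, one checks directly that domino $i+1$ lies strictly below domino $i$ precisely at a positive--to--negative position or at the wrap, and that domino $1$ is vertical exactly when $\pi_1<0$, so that $Des(\pi)=Des(T)$.

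The main obstacle is item (i) together with the bookkeeping around the wrap. The interaction of the two positive arcs with the intervening negative arc makes the construction resistant to a single uniform rule: the same value may have its top cell in different rows depending on the shuffle, and one must nevertheless guarantee that row three ends with length exactly $2$ and that no ambiguity ever arises in Rules 2 and 3. I would therefore organise the proof as a case analysis according to whether $n$ and $1$ are adjacent and to the position of the first negative letter, verifying validity, the shape and descent preservation in each case. Summing the resulting bijective identity over all admissible pairs $(l,k)$ finally produces the two sums $\sum \mathcal{G}_{(a,2n-a-2,2)}$ appearing in Equation~(\ref{eq:SAP}).
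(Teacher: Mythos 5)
Your high-level strategy is the same as the paper's: insert domino $i$ by Rule 1 or Rule 2 according to the sign of $\pi_i$, with exceptional uses of Rules 2 and 3 to open a third row of length exactly $2$, then invert by reading the signs off the rows. The problem is that the proposal stops precisely where the proof has to start. You yourself identify ``pinning down this assignment so that the procedure stays well defined'' as the technical heart and the main obstacle, and then you do not pin it down: no exceptional rule is ever attached to any specific letter, so the map is not actually defined and none of your steps (i)--(iv) can be carried out or checked. In the paper this content is the explicit table of exceptions (Figure~\ref{fig : ExceptionsTable}): type 3 permutations are split into \emph{seven} patterns according to the presence or absence of negative letters in each of the three zones (before $n$, between $n$ and $1$, after $1$), and for each pattern the exceptional steps are named explicitly in terms of $b$ (last negative before $n$), $a$ and $a_2$ (first and second negatives after $n$) and $i_1=\pi^{-1}(1)$ --- for instance, in the pattern with negatives in all three zones one applies Rule 2 at step $b+1$ \emph{even though} $\pi_{b+1}>0$, a move that your purely sign-driven description does not anticipate.

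Two further points. First, your proposed case split (adjacency of $n$ and $1$, position of the first negative) is coarser than what is needed: the paper's pattern with negatives only between $n$ and $1$ even requires a sub-case on whether there are one or at least two such negatives, with different rule assignments in each sub-case. Second, your step (iii) understates the inverse: since seven different insertion schemes all land in the same set of tableaux of shape $(a,2n-a-2,2)$, one must exhibit a decision procedure that determines, from the tableau alone, which pattern its preimage follows (the paper devotes a nine-item algorithm to this); only then can $k$, $l$ and the shuffle be reconstructed, and only then is injectivity across patterns established. As it stands the proposal is a correct plan for the paper's proof, but the bijection itself --- the actual mathematical content of Proposition~\ref{prop : 3} --- is missing.
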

\begin{proof}
Type 3 contains the permutations 
$$\bigcup_{1 \leq l < k < n} \{ \text{-}k \text{-}(k-1) \dots \text{-}(l+1) ~\shuffle~ k+1 \dots n \,\,1 \dots l\}.$$ 
For such permutations we consider the different patterns of existence/absence of negative numbers before $n$, between $n$ and $1$ and after $1$. Note that the absence of negative numbers between $n$ and $1$ implies that $1$ follows $n$ immediately. As type 3 permutations have at least one negative number, there are seven possible patterns. Let $$\pi = \pi_1\dots \pi_b \dots n \dots \pi_{a} \dots \pi_{a_2} \dots \pi_n$$ be a type 3 signed arc permutation where, when applicable, 
\begin{itemize}
\item $b$ is the index of the {\bf last negative integer  before} $n$,
\item $a$ is the index of the  {\bf  first negative integer after} $n$,
\item $a_2$ is the index of the  {\bf second negative integer after} $n$.
\end{itemize}
Additionally, let $i_1 = \pi^{-1}(1)$ be the {\bf index of $\bf 1$} in $\pi$.\\ 
As for the other types, we build recursively a standard domino tableau using Rule 1 (resp. Rule 2) at step $i$ if $\pi_i >0$ (resp. $\pi_i <0$ ) except for some particular values of $i$ that depend on the considered pattern. All these exceptions are listed in Figure~\ref{fig : ExceptionsTable}. See also Figure \ref{fig : SAPType3} for a graphical illustration. 
\begin{figure} [h!]
\begin{center}
\begin{tabular}[t]{|c|c|m{20em}|}
\hline
 Pattern & Description & Exceptions \\
\hline 
 1 & $-n-1-$ &  Apply Rule 2 at step $b+1$ although $\pi_{b+1} >0$. Apply Rule 3 at step $a$. \\   \hline 
 2 & $-n-1+$ & Apply Rule 2 at step $b+1$ and step $i_1$. Apply Rule 3 at step $a$. \\   \hline
 3 & $-n+1-$ & Apply Rule 2 at step $i_1$ and Rule 3 at step $a$. \\   
\hline
 4 & $-n+1+$ & Apply Rule 2 at step $b+1$ and Rule 3 at step $i_1$. \\ \hline
 5 & $+n-1-$ & Apply Rule 3 at step $a$ and step $a_2$. \\ \hline
 6 & $+n-1+$ & If $a_2 < i_1$ apply Rule 3 at step $a$ and step $a_2$. Apply Rule 2 at step $i_1$. Otherwise, apply Rule 3 at step $a$ and step $i_1$. \\ \hline
 7 & $+n+1-$ & Apply Rule 2 at step $i_1$ and Rule 3 at step $a$. \\ 
\hline
\end{tabular}
\end{center}
\caption{
Table of exceptions for the case of type 3. The patterns $\pm n \pm 1 \pm$ indicate the existence or absence of at least one negative integer before $n$, between $n$ and $1$ and after $1$. A '$+$' sign means the absence of such a negative integer while a '$-$' sign indicates its presence. Recall that the absence of negative numbers between $n$ and $1$ implies that $1$ follows $n$ immediately. 
}
\label{fig : ExceptionsTable}
\end{figure}

Let $T$ be a standard domino tableau of shape $(a,2n-a-2,2)$. We show that $T$ has a unique type 3 preimage $\pi$ that we recover by the following procedure. First we determine the pattern that $\pi$ follows. 
\begin{enumerate}
\item The tableau $T$ has either two vertical dominoes across the second and third row or a horizontal domino in the third row. In the former case go to item $2$. Otherwise, go to item $3$.  
\item The signed arc permutation $\pi$ follows pattern 5 or 6. If the entries of the two vertical dominoes across the second and third row are not adjacent integers then $\pi$ follows {\bf pattern 5}. 
Otherwise, we consider the location of the next domino. 
If it lies in the first row, then $\pi$ also follows {\bf pattern 5}. 
Otherwise, $\pi$ follows {\bf pattern 6}. 
\item The signed arc permutation $\pi$ follows pattern 1, 2, 3, 4 or 7 and there is exactly one horizontal domino in the third row. 
Denote by $i$ the label of this domino and consider the subtableau $T'$ of $T$ composed of the dominoes labelled $1, 2 \dots i$. If $T'$ contains only one (horizontal) domino in its second row go to item $4$. Otherwise, there are at least two of them and go to item $5$.
\item The signed arc permutation $\pi$ follows {\bf pattern 7}.
\item Let $j$ be the greatest label amongst Rule 2 dominoes (horizontal in the second row or vertical across the first and the second row) of $T'$. If domino $j-1$ is a Rule 1 one (horizontal in the first row) go to item 6. Otherwise, $j-1$ is a Rule 2 domino and go to item 7.
\item The signed arc permutation $\pi$ follows {\bf pattern 3}.
\item If there is no Rule 2 domino in $T \setminus T'$ go to item 8. Otherwise, go to item 9.
\item The signed arc permutation $\pi$ follows {\bf pattern 4}.
\item If $Des(T \setminus T') = \emptyset$, $\pi$ follows {\bf pattern 2}. Otherwise, it follows {\bf pattern 1}.
\end{enumerate}
\begin{figure}[h]
\begin{center}
$$
\begin{matrix}
\text{Pattern 1}&
\begin{matrix}
\overbrace{* \dots *}^{n_1 \ge 0} & - & \overbrace{+ \dots + n}^{n_2 \ge 1}  & \overbrace{- \dots -}^{n_3 \ge 1} & 1 & \overbrace{+ \dots +}^{n_4 \ge 0} & - & \overbrace{* \dots *}^{n_5 \ge 0} \\
R \dots R & R & R2 \dots R R & R3 \dots R & R & R \dots R & R & R \dots R 
\end{matrix}
\vspace{3pt}\\
\text{Pattern 2} &
\begin{matrix}
\overbrace{* \dots *}^{n_1 \ge 0} & - & \overbrace{+ \dots + n}^{n_2 \ge 1}  & \overbrace{- \dots -}^{n_3 \ge 1} & 1 & \overbrace{+ \dots +}^{n_4 \ge 0}\\
R \dots R & R & R2 \dots R R & R3 \dots R & R2 & R \dots R 
\end{matrix}
\vspace{3pt}\\
\text{Pattern 3} &
\begin{matrix}
\overbrace{* \dots *}^{n_1 \ge 0} & n & 1 & \overbrace{+ \dots +}^{n_2 \ge 0} & - & \overbrace{* \dots *}^{n_3 \ge 0}\\
R \dots R & R & R2 & R \dots R & R3 & R \dots R
\end{matrix}
\vspace{3pt}\\
\text{Pattern 4} &
\begin{matrix}
\overbrace{* \dots *}^{n_1 \ge 0} & - & \overbrace{+ \dots + n}^{n_2 \ge 1} & 1 & \overbrace{+ \dots +}^{n_3 \ge 0}\\
R \dots R & R & R2 \dots R R & R3 & R \dots R 
\end{matrix}
\vspace{3pt}\\
\text{Pattern 5} &
\begin{matrix}
\overbrace{+ \dots +}^{n_1 \ge 0} & n & \overbrace{- - \dots -}^{n_2 \ge 2} & 1 & \overbrace{+ \dots +}^{n_3 \ge 0} & - & \overbrace{* \dots *}^{n_4 \ge 0}\\
R \dots R & R & R3 R3 \dots R & R & R \dots R & R & R \dots R \vspace{3pt}\\
\overbrace{+ \dots +}^{n_1 \ge 0} & n & \overbrace{-}^{n_2 = 1} & 1 & \overbrace{+ \dots +}^{n_3 \ge 0} & - & \overbrace{* \dots *}^{n_4 \ge 0}\\
R \dots R & R & R3 & R & R \dots R & R3 & R \dots R 
\end{matrix}
\vspace{3pt}\\
\text{Pattern 6} &
\begin{matrix}
\overbrace{+ \dots +}^{n_1 \ge 0} & n & \overbrace{- - \dots -}^{n_2 \ge 2} & 1 & \overbrace{+ \dots +}^{n_3 \ge 0}\\
R \dots R & R & R3 R3 \dots R & R2 & R \dots R \vspace{3pt}\\
\overbrace{+ \dots +}^{n_1 \ge 0} & n & \overbrace{-}^{n_2 = 1} & 1 & \overbrace{+ \dots +}^{n_3 \ge 0}\\
R \dots R & R & R3 & R3 & R \dots R  
\end{matrix}
\vspace{3pt}\\
\text{Pattern 7} &
\begin{matrix}
\overbrace{+ \dots + n}^{n_1 \ge 1} & 1 & \overbrace{+ \dots +}^{n_2 \ge 0} & - & \overbrace{* \dots *}^{n_3 \ge 0}\\
R \dots R R & R2 & R \dots R & R3 & R \dots R 
\end{matrix}
\end{matrix}
$$
\end{center}
\caption{
Graphical representation of the tableau construction rules for Type 3 signed arc permutations of each pattern. The first row explicits the template of the permutation of the considered pattern. A '$+$' indicates a positive integer, a '$-$' a negative one, a '$*$' means that the entry is either positive or negative. The second row indicates the applicable rule of construction of the domino tableau. We write $R$ when we use Rule 1 (resp. Rule 2) for positive (resp. negative) entries and $R2, R3$ to indicate the use of Rule 2 and Rule 3 for the exceptions listed in Figure \ref{fig : ExceptionsTable}. Dots in the second row indicate the use of the normal rule $R$. 
}
\label{fig : SAPType3}
\end{figure}
Secondly, as the pattern of $\pi$ is identified, we recover a unique permutation following the templates in Figure \ref{fig : SAPType3}. 
\begin{itemize}
\item For each step $i$ we know the sign of $\pi_i$ (including exceptional steps) and recover the pattern of shuffle of the positive and negative subsequences. 
\item Rule 3 dominoes allow us to compute the position $j$ for which $\pi_j = 1$. The number of positive $\pi_i$, such that $i < j$ determines $n-k$. We recover $l$ by looking at the positive  $\pi_i$, such that $j < i$. The two subsequences are determined. 
\end{itemize}
Finally, as the two subsequences and the shuffle pattern are determined so is $\pi$. 
One can note that the bijection is descent preserving by looking at Figure \ref{fig : SAPType3}. The application of either the common rule $R$ or the exceptions $R2$ and $R3$ guarantees the simultaneous existence or absence of descent in both the permutation and the tableau.
\end{proof}

\begin{example}
The type 3 (pattern 2) signed arc permutation $\pi = \text{-}4\,\text{-}3\,\,5\,\,6\,\,7\,\text{-}2\,\,1$ is mapped to the following standard domino tableau: 
\[
\begin{matrix}
\resizebox{!}{1.8 cm}{%
\begin{tikzpicture}[node distance=0 cm, outer sep = 0pt, line width=1pt]
        \node[bver] (1) at (-0.5, -0.5) {\bf \Large 1};
        \node[bver] (2) [right = of 1] {\bf \Large 2};
        \node[bver] (3) [right = of 2] {\bf \Large 3};
        \node[bhor] (4) at (3, 0) {\bf \Large 4};
        \node[bhor] (5)  [right = of 4] {\bf \Large 5};
        \node[bhor] (6) at (0, -2) {\bf \Large 6};    
	\node[bhor] (7) [below = of 4] {\bf \Large 7};
\end{tikzpicture}  
}
\end{matrix}  \]
\end{example}
We finish the proof of Theorem~\ref{thm:SAP} using the descent preserving bijections above to write

\begin{align*}
\sum_{\pi \in \mathcal{A}^s_{n}} F^B_{Des(\pi)} &= \sum_{T \in SDT(2n)} F^B_{Des(T)}+\sum_{T \in SDT(2n-2,1,1)} F^B_{Des(T)} \\
&+\sum_{T \in SDT(2n-1,1)} F^B_{Des(T)}+\sum_{T \in SDT(2n-3,1,1,1)}\hspace{-5mm} F^B_{Des(T)}\\
&+\sum_{a,T \in SDT(a,2n-a-2,2)} F^B_{Des(T)}+\sum_{a,T \in SDT(a,2n-a-2,1,1)} F^B_{Des(T)} \\
&+2\sum_{a,T \in SDT(a,2n-a)} F^B_{Des(T)},
\end{align*}
which gives Theorem~\ref{thm:SAP} after application of Equation (\ref{eq : GdX}).
\section{Alternative proof of Theorem~\ref{thm:SAP}}
\label{sec : AltProof}
Although in the previous sections our development based on domino tableaux and Chow's quasisymmetric functions fully proves Theorem~\ref{thm:SAP}, the connection between our work and the approach in \cite{AdiAthEliRoi15} remains a natural question. 
As stated in introduction, their approach involves bi-tableaux and Poirier's quasisymmetric functions. In this section we connect our work to these objects and provide an alternative proof of Theorem~\ref{thm:SAP}. While this new proof is somewhat simpler, the bijections involved are less explicit. We briefly introduce the necessary definitions and refer the reader to \cite{AdiAthEliRoi15} for more details.   

\subsection{Signed descent set and Poirier's quasisymmetric functions}

We denote $sDes(\pi)$ the {\bf signed descent set}  of $\pi \in B_n $ defined as $sDes(\pi) = (S,\eps)$ with
\begin{itemize}
\item 
$
S = \{n\}\cup \{ i \mid \begin{cases}  \pi(i) > \pi(i+1), & \mbox{ if } \pi(i) >0 \\   \mbox{either } \pi(i+1) >0 \mbox{ or }  |\pi(i)| > |\pi(i+1)|, & \mbox{ if } \pi(i)<0 \end{cases} \}
$
\item $\eps$ is the mapping from $S$ to $\{-,+\}$ defined as $\eps(s) = +$ if $\pi(s)>0$ and $\eps(s) = -$ otherwise. The sign vector $\eps$ is extended to $[n]$ by setting $\tilde{\eps}(j) = \eps(s_i)$ for $s_{i-1}<j\leq s_i$ (assuming $s_0 = 0$).
\end{itemize}

Further denote 
$$wDes((S, \eps)) = \{s_k \in S ~|~ s_k \ne n,~ \tilde{\eps}(s_i)\tilde{\eps}(s_{i}+1) \in \{++, --, +-\} \}.$$

\begin{defn}[Poirier's fundamental quasisymmetric functions \cite{Poi98}]
Let $X =\{x_1,x_2,\dots\}$ and $Y =\{y_1,y_2,\dots\}$ be two totally ordered alphabets of commutative indeterminates. In \cite{Poi98} Poirier introduces a new family of quasisymmetric functions that generalises the quasisymmetric functions of Gessel to coloured permutations. In the case of 2-colour (signed) permutations, Poirier's fundamental quasisymmetric function $F^P_{(S,\eps)}$ indexed by the signed set $(S,\eps)$ is defined as 
$$
F^P_{(S,\eps)}(X, Y) = \sum_{j \in wDes((S,\eps)) \Rightarrow i_j < i_{j+1}} z_{i_1} \dots z_{i_n},
$$
where $i_1 \le \dots \le i_n$ and $z_i = x_i$ if $\tilde{\eps}(i) = -$ and $z_i = y_i$ otherwise.
\end{defn}
\begin{defn}
For a positive integer $n$ consider the total order $<_r$ defined as $\text{-}1 <_r \text{-}2 <_r \dots \text{-}n <_r 0 <_r  1 <_r 2 <_r \dots <_r n$. Given $\pi \in B_n$ define 
$$
Des_r(\pi) = \{0 \leq i \leq n-1 \mid \pi(i) >_r \pi(i+1)\}
$$ the notion of descent set associated with the order $<_r$.
\end{defn}
\begin{lem}
\label{rem : reordering}
Let $X =\{x_0, x_1,x_2,\dots\}$ and $X^*=X\setminus\{x_0\}$. Chow's quasisymmetric function indexed by $Des_r(\pi)$ for $\pi \in B_n$ may be expressed in terms of Poirier's quasisymmetric functions as follows:
$$
F^B_{Des_r(\pi)}(X) = F^P_{sDes(\pi)}(X^*, X).
$$
\end{lem}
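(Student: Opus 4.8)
The plan is to reduce the identity to two separate combinatorial facts: that the descent data indexing the two functions agree off position $0$, and that the special variable $x_0$ is governed identically on both sides. Both $F^B_{Des_r(\pi)}(X)$ and $F^P_{sDes(\pi)}(X^*,X)$ are sums over monomials in $X=\{x_0,x_1,\dots\}$ in which every monomial occurs with coefficient at most one (the colour sequence $\tilde{\eps}$ is fixed by $\pi$, so a monomial determines the multiplicities, hence the unique weakly increasing index sequence producing it). Thus it suffices to show that a fixed monomial $x_0^{a_0}x_1^{a_1}\cdots$, equivalently the unique weakly increasing sequence $v_1\le v_2\le\cdots\le v_n$ with $a_m=\#\{k:v_k=m\}$, occurs on one side if and only if it occurs on the other.

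First I would compute $Des_r(\pi)\cap[n-1]$ and $wDes(sDes(\pi))$ by a four-case analysis on the sign pattern of $(\pi(k),\pi(k+1))$ for $1\le k\le n-1$, writing the patterns as $++,\,+-,\,-+,\,--$. Using that $-a>_r -b\iff a>b$ while every negative value precedes $0$ and every positive value, one checks that $k\in Des_r(\pi)$ iff $\pi(k)>\pi(k+1)$ in pattern $++$, always in pattern $+-$, never in pattern $-+$, and iff $|\pi(k)|>|\pi(k+1)|$ in pattern $--$. On the other side $k\in S$ in exactly the same cases, except that pattern $-+$ also always lies in $S$; and since $\tilde{\eps}(s)=\mathrm{sign}\,\pi(s)$ and $\tilde{\eps}(s+1)=\mathrm{sign}\,\pi(s+1)$ (the sign being constant on each block between consecutive elements of $S$), the clause defining $wDes$ discards from $S$ precisely the pattern $-+$ positions. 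Hence $wDes(sDes(\pi))=Des_r(\pi)\cap[n-1]$, so the strict-increase constraints that the two functions impose on $(v_k)$ coincide.

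It remains to match the role of $x_0$, which is the genuinely type B feature. On the Chow side $x_0$ is forbidden exactly when $0\in Des_r(\pi)$, i.e. (since $\pi(0)=0$ and $0>_r\pi(1)\iff\pi(1)<0$) when $\tilde{\eps}(1)=-$. On the substituted Poirier side, negative positions draw from $X^*$ (which omits $x_0$) and positive positions from $X$, so the constraint is $v_k\ge 1$ for every $k$ with $\tilde{\eps}(k)=-$. I would then prove these two conditions equivalent under the common strict constraints. If $\tilde{\eps}(1)=-$ then $0\in Des_r$ forces $v_1\ge1$. Conversely, if some negative position had $v_k=0$, weak monotonicity gives $v_1=\cdots=v_k=0$, so $0\notin Des_r$ yields $\tilde{\eps}(1)=+$ and the absence of any descent in $\{1,\dots,k-1\}$; since a $+\to-$ sign change always produces a $wDes$ descent (pattern $+-$) whereas reaching a $-$ sign starting from $+$ requires such a change first, all of $\tilde{\eps}(1),\dots,\tilde{\eps}(k)$ are $+$, contradicting the negativity of position $k$. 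This shows the monomials occurring on the two sides coincide, which proves the lemma.

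The main obstacle is the bookkeeping around $x_0$ and the ordering conventions: one must verify that substituting $X^*$ and $X$ into Poirier's two coloured alphabets makes $x_0$ available to positive positions but not to negative ones, that the weak and strict comparisons are read off the variable index (so the two coloured copies of a given $x_i$ are order-equivalent), and that a leading block of a weakly increasing sequence carrying no forced descent inherits a constant sign. The descent-set computation of the second paragraph is routine case-checking; it is the equivalence of the two $x_0$-constraints, resting on the asymmetric treatment of $+-$ versus $-+$ transitions in $wDes$, where the type B subtlety is concentrated.
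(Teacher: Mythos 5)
Your proof is correct and follows essentially the same route as the paper: the paper's entire proof is the one-line observation that $Des_r(\pi)$ equals $wDes(sDes(\pi))$, together with $\{0\}$ exactly when $\tilde{\eps}(1)=-$, which is precisely the content of your second paragraph. You additionally write out the monomial-level bookkeeping for $x_0$ (showing a negative position can never carry index $0$ in an admissible sequence because a $+\to-$ transition forces a strict increase), a verification the paper leaves implicit.
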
 
\begin{proof}
Note that for $\pi \in B_n$,
$
Des_r(\pi) =  \begin{cases}
   wDes(\pi), & \text{ if } \tilde{\eps}(1) = +, \\
   wDes(\pi) \cup \{0\}, & \text{ if } \tilde{\eps}(1) = -. 
\end{cases} 
$
\end{proof}

\begin{lem}
\label{lem : W1}
There is a bijection $\phi_1 : B_n \rightarrow B_n$ satisfying for all $\pi \in B_n$ 
$$ Des(\pi) = Des_r(\phi_1(\pi));~~~ Des_r(\pi) = Des(\phi_1(\pi)).$$
\end{lem}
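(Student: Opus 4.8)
The plan is to exhibit $\phi_1$ explicitly as an \emph{involution}, so that it suffices to establish the single identity $Des(\pi) = Des_r(\phi_1(\pi))$; the second identity $Des_r(\pi) = Des(\phi_1(\pi))$ then follows by applying the first to $\phi_1(\pi)$ and using $\phi_1^2 = \mathrm{id}$. First I would compare the two orders pairwise. With the convention $\pi(0)=0$, the orders $<$ and $<_r$ coincide on every pair of symbols \emph{except} on pairs of negative values, where they are exactly opposite: for $a,b>0$ one has $-a<-b \iff a>b$, whereas $-a <_r -b \iff a<b$. Consequently, for a fixed $\pi$, the positions $i$ at which membership in $Des(\pi)$ and in $Des_r(\pi)$ can differ are precisely those with $\pi(i)<0$ and $\pi(i+1)<0$; at all other positions (both entries non-negative, or one non-negative and one negative, or $i=0$) the two descent notions agree.

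This localizes the whole problem to the maximal runs of consecutive negative entries of $\pi$. I would then define $\phi_1$ to act as the identity on all non-negative entries and, inside each maximal run of consecutive negative entries, to replace the absolute values by their images under the unique order-reversing bijection $\sigma$ of the \emph{set of absolute values occurring in that run}, keeping signs negative and positions fixed. Since within a run the absolute values are distinct and are merely permuted among themselves, $\phi_1(\pi)$ is again an element of $B_n$; moreover it has the same sign pattern and the same decomposition into negative runs as $\pi$, and applying the construction twice restores $\pi$ because each $\sigma$ is an order-reversing bijection, hence an involution. Thus $\phi_1$ is a bijection of $B_n$ and in fact an involution, which also settles the well-definedness point that $|\phi_1(\pi)(i)|$ still ranges over $[n]$.

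Finally I would verify $Des(\pi)=Des_r(\phi_1(\pi))$ by the case analysis above. For positions that are not internal to a negative run, $\phi_1$ leaves the relevant pair of entries in place (and preserves which entries are negative), while $<$ and $<_r$ agree there, so the two descent conditions coincide. For a position $i$ internal to a run one has $|\phi_1(\pi)(i)| = \sigma(|\pi(i)|)$, so that $|\pi(i)|<|\pi(i+1)| \iff |\phi_1(\pi)(i)|>|\phi_1(\pi)(i+1)|$; since for two negative entries a standard descent means \emph{increasing} absolute value while an $r$-descent means \emph{decreasing} absolute value, this equivalence is exactly $i\in Des(\pi) \iff i\in Des_r(\phi_1(\pi))$.

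The conceptual heart is the localization observation of the first paragraph, and the step requiring the most care is the boundary bookkeeping: I must check that confining the modification to the \emph{interior} of each negative run does not disturb the descents at the first and last position of a run, which already match between $Des$ and $Des_r$ and must remain matched after relabeling. Verifying this boundary consistency, rather than the internal order-reversal itself, is the main thing to get right.
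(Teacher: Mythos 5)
Your proposal is correct and follows essentially the same route as the paper: both rest on the observation that $Des$ and $Des_r$ can only disagree at positions where two consecutive entries are negative, and both then relabel the negative entries by an order-reversing involution so that the relative order of every adjacent negative pair is flipped. The only difference is that the paper reverses the order of all negative values globally (via the involution $\al$ on $Neg(\pi)$) while you reverse within each maximal negative run; since the required property is purely local to adjacent negative pairs and both maps preserve the sign pattern, either choice works.
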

\begin{proof}
Let $\pi$ be a signed permutation of $B_n$.
For $s \in [n-1]$ consider $\tilde{\eps}(s)$ and $\tilde{\eps}(s+1)$. If at least one of them is positive then $s$ belongs to $Des_r(\pi)$ if and only if $s$ belongs to $Des(\pi).$ If both $\tilde{\eps}(s)$, $\tilde{\eps}(s+1)$ are negative then $s$ lies exactly in one of these sets. Using this property we suggest the following appropriate bijection $\phi_1$. Let $\pi$ be a signed permutation of $B_n$ and denote $Neg(\pi) = \{1\leq i \leq n | \pi(i)<0\}.$ Write the elements of $Neg(\pi) = \{j_1, j_2 \dots j_{|Neg(\pi)|}\}$ in such a way that $\pi(j_1) < \pi(j_2) < \dots < \pi(j_{|Neg(\pi)|})$
and denote $\al$ the involution on $Neg(\pi)$ defined for $1 \leq k \leq |Neg(\pi)|$ as $\al(j_k) = j_{|Neg(\pi)|+1 - k}.$
Then define the signed permutation $\phi_1(\pi)$ by \linebreak
$\phi_1(\pi)(i) = \begin{cases}
   \pi(i), & \text{ if } \pi(i) >0, \\
   \pi(\al(i)), & \text{ if } \pi(i) <0. 
\end{cases}$
\end{proof}
\begin{example} 
For $\pi = \text{-}3\,\phantom{\text{-}} 8\,\phantom{\text{-}} 5\,\text{-} 2\,\phantom{\text{-}} 1\,\text{-} 9\, \text{-} 7\,\phantom{\text{-}} 4\,\phantom{\text{-}} 6$ 
we have $\phi_1(\pi) = \text{-} 7\,\phantom{\text{-}} 8\,\phantom{\text{-}} 5\,\text{-} 9\,\phantom{\text{-}} 1\,\text{-} 2\,\text{-} 3\,\phantom{\text{-}} 4\,\phantom{\text{-}} 6$.
\end{example} 

\subsection{Bi-tableaux}

Now we emphasise the connection between bi-tableaux and both signed permutations and domino tableaux.\\
\begin{defn}
A {\bf standard bi-tableau} $(T_1, T_2)$ is an ordered pair of Young diagrams of bi-shape $(\la_1, \la_2)$ with $|\la_1|+ |\la_2| =n$ and whose boxes are filled with the elements of $[n]$ such that the entries of both $T_1$ and $T_2$ are strictly increasing along the rows and down the columns. We denote $SBT(\la_1,\la_2)$ the set of standard bi-tableaux of bi-shape $(\la_1, \la_2)$.\\
The {\bf signed descent set} $sDes(T_1, T_2)$ of a standard bi-tableau $(T_1, T_2) \in SBT(\la_1, \la_2)$ is the signed set $(S, \eps)$ defined as follows:
\begin{itemize}
\item S contains all $s \in [n-1]$ for which either both $s$ and $s+1$ appear in the same tableau and $s+1$ is in a lower row than $s$, or $s$ and $s+1$ appear in different tableaux.
\item S contains $n$.
\item For every $s \in S$ we denote $\eps(s) = -$ if $s$ appears in $T_1$ and $\eps(s) = +$ otherwise.
\end{itemize} 
\end{defn}
Further we provide the following equivalent definition of~$Des_r$ for bi-tableaux.

\begin{defn}
The descent set $Des_r(T_1, T_2)$  of a standard bi-tableau $(T_1, T_2) \in SBT(\la, \mu)$ is the subset of $\{0\} \bigcup [n-1]$ containing:
\begin{itemize}
\item each $s \in [n-1]$ such that both $s$ and $s+1$ appear in the same tableau and $s+1$ is in lower row than $s$
\item each $s \in [n-1]$ such that $s \in T_2$ and $s+1 \in T_1$
\item 0 if $1 \in T_1$.
\end{itemize}
\end{defn}
\begin{defn}
A {\bf semistandard bi-tableau} $(T_1, T_2)$ is an ordered pair of Young diagrams of bi-shape $(\la_1, \la_2)$ whose boxes are filled with {\em non-negative} integers such that the entries are strictly increasing down the columns and non-decreasing along the rows. As an additional constraint {\bf zeroes may only appear in $T_2$}.
\end{defn}
\begin{example} 
Both the standard bi-tableau $(T_1,T_2)$ and the semistandard bi-tableau $(U_1,U_2)$ on Figure \ref{fig : BiT} have bi-shape $((3), (2,2,2))$.\\ Furthermore, $sDes(T_1,T_2) = (\{2,3,4,6,8,9\},+-+-++)$ and $Des_r(T_1,T_2)=\{2,4,8\}$.
\end{example}
\begin{figure}[h!]
$$
\left(T_1=
\begin{matrix} 
\resizebox{!}{0.5cm}{%
\begin{tikzpicture}[node distance=0 cm,outer sep = 0pt]
	      \node[bsq] (1) at (0,  0) {\bf \Large 3};
	      \node[bsq] (2) [right = of 1] {\bf \Large 5};
	      \node[bsq] (3) [right = of 2] {\bf \Large 6};	
\end{tikzpicture}
}
\end{matrix},~ 
T_2 =
\begin{matrix} 
\resizebox{!}{1.5cm}{%
\begin{tikzpicture}[node distance=0 cm,outer sep = 0pt]
	      \node[bsq] (1) at (0,  0) {\bf \Large 1};
	      \node[bsq] (2) [right = of 1] {\bf \Large 2};
	      \node[bsq] (3) [below = of 1] {\bf \Large 4};
	      \node[bsq] (4) [right = of 3] {\bf \Large 8};
	      \node[bsq] (5) [below = of 3] {\bf \Large 7};
	      \node[bsq] (6) [right = of 5] {\bf \Large 9};  
\end{tikzpicture}
}
\end{matrix}
~\right)
~~~
\left(U_1 =
\begin{matrix} 
\resizebox{!}{0.5cm}{%
\begin{tikzpicture}[node distance=0 cm,outer sep = 0pt]
	      \node[bsq] (1) at (0,  0) {\bf \Large 2};
	      \node[bsq] (2) [right = of 1] {\bf \Large 5};
	      \node[bsq] (3) [right = of 2] {\bf \Large 5};	
\end{tikzpicture}
}
\end{matrix},~ 
U_2 =
\begin{matrix} 
\resizebox{!}{1.5cm}{%
\begin{tikzpicture}[node distance=0 cm,outer sep = 0pt]
	      \node[bsq] (1) at (0,  0) {\bf \Large 0};
	      \node[bsq] (2) [right = of 1] {\bf \Large 1};
	      \node[bsq] (3) [below = of 1] {\bf \Large 4};
	      \node[bsq] (4) [right = of 3] {\bf \Large 6};
	      \node[bsq] (5) [below = of 3] {\bf \Large 5};
	      \node[bsq] (6) [right = of 5] {\bf \Large 7};  
\end{tikzpicture}
}
\end{matrix}
~~\right)
$$
\caption{A standard and a semistandard bi-tableau.}
\label{fig : BiT}
\end{figure}
The generalisation of the RS correspondence that maps signed permutations to an ordered pair of standard bi-tableaux is rather straightforward (see e.g. \cite[Section 6.2]{Sta82}). Given a signed permutation $\pi$, the ordered pair of standard bi-tableaux $(P_1,P_2),(Q_1,Q_2)$ is built by applying the RS correspondence to both the positive and negative subsequence of $\pi$ (more precisely by applying its extension to more general words, the {\it RSK correspondence} \cite{Knu70}). It maps the negative subsequence to $(P_1,Q_1)$ and the positive subsequence to $(P_2,Q_2)$. This mapping is descent preserving in the sense that $sDes(\pi) = sDes(Q_1,Q_2)$ and $sDes(\pi^{\text{-}1}) = sDes(P_1, P_2)$ as well as $Des_r(\pi) = Des_r(Q_1,Q_2)$ and $Des_r(\pi^{\text{-}1}) = Des_r(P_1, P_2)$.

\begin{example}
The signed permutation $\pi = \text{-}3~8~5~\text{-}2~1~\text{-}9~\text{-}7~4~\text{-}6$ is associated to the ordered pair of bi-tableaux $(P_1,P_2),(Q_1,Q_2)$ depicted on Figure \ref{fig : RSB}. One can check that $sDes(\pi) = sDes(Q_1,Q_2) = ([9],-++-+--+-)$ and $sDes(\pi^{\text{-}1}) = sDes(P_1,P_2) = \left ([9],+--++--+-\right)$. Finally, $Des_r(\pi) = Des_r(Q_1,Q_2) = \{0,2,3,5,6,8\}$ and $Des_r(\pi^{\text{-}1}) = Des_r(P_1,P_2) = \{1,2,4,5,6,8\}$.
\end{example}
\begin{figure}[h]
\begin{center}
\begin{align*}
\pi = \text{-}3~8~5~\text{-}2~&1~\text{-}9~\text{-}7~4~\text{-}6
~~\xrightarrow[~~~~~~]{}~~\\
&\left(P_1=
\begin{matrix} 
\resizebox{!}{1.5cm}{%
\begin{tikzpicture}[node distance=0 cm,outer sep = 0pt]
	      \node[bsq] (1) at (0,  0) {\bf \Large 2};
	      \node[bsq] (2) [right = of 1] {\bf \Large 6};
	      \node[bsq] (3) [below = of 1] {\bf \Large 3};
	      \node[bsq] (4) [right = of 3] {\bf \Large 7};
	       \node[bsq] (3) [below = of 3] {\bf \Large 9};	
\end{tikzpicture}
}
\end{matrix},~ 
P_2 =
\begin{matrix} 
\resizebox{!}{1.5cm}{%
\begin{tikzpicture}[node distance=0 cm,outer sep = 0pt]
	      \node[bsq] (1) at (0,  0) {\bf \Large 1};
	      \node[bsq] (2) [right = of 1] {\bf \Large 4};
	      \node[bsq] (3) [below = of 1] {\bf \Large 5};
	      \node[bsq] (3) [below = of 3] {\bf \Large 8};		
\end{tikzpicture}
}
\end{matrix}
~\right)
,~
\left(Q_1 =
\begin{matrix} 
\resizebox{!}{1.5cm}{%
\begin{tikzpicture}[node distance=0 cm,outer sep = 0pt]
	      \node[bsq] (1) at (0,  0) {\bf \Large 1};
	      \node[bsq] (2) [right = of 1] {\bf \Large 6};
	      \node[bsq] (3) [below = of 1] {\bf \Large 4};
	      \node[bsq] (4) [right = of 3] {\bf \Large 7};
	       \node[bsq] (3) [below = of 3] {\bf \Large 9};	
\end{tikzpicture}
}
\end{matrix},~ 
Q_2 =
\begin{matrix} 
\resizebox{!}{1.5cm}{%
\begin{tikzpicture}[node distance=0 cm,outer sep = 0pt]
	      \node[bsq] (1) at (0,  0) {\bf \Large 2};
	      \node[bsq] (2) [right = of 1] {\bf \Large 8};
	      \node[bsq] (3) [below = of 1] {\bf \Large 3};
	      \node[bsq] (3) [below = of 3] {\bf \Large 5};		
\end{tikzpicture}
}
\end{matrix}
~~\right)
\end{align*}
\end{center}
\caption{
A signed permutation and the associated ordered pair of bi-tableaux. 
}
\label{fig : RSB}
\end{figure}
\subsection{Connection between domino and bi-tableaux}
\label{sec : BiT}
As stated in Remark \ref{rem : Bsym}, there is a well known weight preserving bijection between domino tableaux and bi-tableaux. We give some more details in this section. 
Denote $(T^-,T^+)$ the bi-tableau associated to a domino tableau $T$. According to the description of \cite[Algorithm 6.1]{CarLec95}, the Young tableaux $T^-$ and $T^+$ are built by filling each box of $T$ (a domino is composed of two boxes) with a '--' or a '+' sign such that the top leftmost box is filled with '--' and two adjacent boxes have opposite signs. 
$T^-$ (resp. $T^+$) is obtained from the subtableau of $T$ composed of the dominoes with top rightmost box filled with '--' (resp. '+'). 
   
\begin{example} Figure \ref{fig : SSDT} shows a semistandard domino tableau and its 2-quotient. 
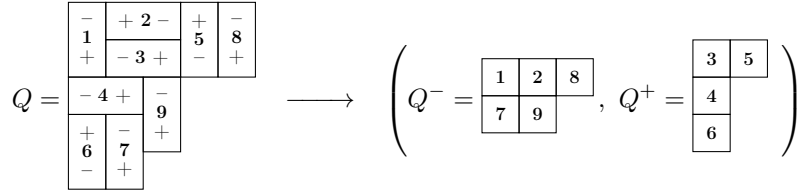
\begin{figure} [h]
\begin{center}
\[
Q =
\begin{matrix}
\resizebox{!}{2.5cm}{%
\begin{tikzpicture}[node distance=0 cm,outer sep = 0pt]
         \node[bver] (1) at ( 0,   0) {$\begin{matrix} 
\text{\raisebox{2pt}{\bf \Large --}} \\
\mathlarger{\mathlarger{\mathlarger{\bm{1}}}} \\ 
\text{\raisebox{-2pt}{\bf \Large +}} \end{matrix}$};
        \node[bhor] (2) at ( 1.5,   0.5) {\bf \Large +~2~--};
        \node[bhor] (3) [below = of 2] {\bf \Large --~3~+};         
        \node[bhor] (4) at ( 0.5,   -1.5) {\bf \Large --~4~+}; 
        \node[bver] (5) at ( 3,   0)  {$\begin{matrix} 
\text{\raisebox{2pt}{\bf \Large +}} \\
\mathlarger{\mathlarger{\mathlarger{\bm{5}}}} \\ 
\text{\raisebox{-2pt}{\bf \Large --}} \end{matrix}$};        
        \node[bver] (6) at ( 0,   -3) {$\begin{matrix} 
\text{\raisebox{2pt}{\bf \Large +}} \\
\mathlarger{\mathlarger{\mathlarger{\bm{6}}}} \\ 
\text{\raisebox{-2pt}{\bf \Large --}} \end{matrix}$};      
        \node[bver] (7)  [right = of 6] {$\begin{matrix} 
\text{\raisebox{2pt}{\bf \Large --}} \\
\mathlarger{\mathlarger{\mathlarger{\bm{7}}}} \\ 
\text{\raisebox{-2pt}{\bf \Large +}} \end{matrix}$};       
        \node[bver] (8) [right = of 5] {$\begin{matrix} 
\text{\raisebox{2pt}{\bf \Large --}} \\
\mathlarger{\mathlarger{\mathlarger{\bm{8}}}} \\ 
\text{\raisebox{-2pt}{\bf \Large +}} \end{matrix}$};
        \node[bver] (9) at ( 2,   -2) {$\begin{matrix} 
\text{\raisebox{2pt}{\bf \Large --}} \\
\mathlarger{\mathlarger{\mathlarger{\bm{9}}}} \\ 
\text{\raisebox{-2pt}{\bf \Large +}} \end{matrix}$};
\end{tikzpicture}  
}
\end{matrix} 
~~\xrightarrow[~~~~~~]{}~~ 
\left(Q^- =
\begin{matrix} 
\resizebox{!}{1cm}{%
\begin{tikzpicture}[node distance=0 cm,outer sep = 0pt]
	      \node[bsq] (1) at (0,  0) {\bf \Large 1};
	      \node[bsq] (2) [right = of 1] {\bf \Large 2};
	      \node[bsq] (3) [right = of 2] {\bf \Large 8};
	      \node[bsq] (4) [below = of 1] {\bf \Large 7};
	      \node[bsq] (5) [below = of 2] {\bf \Large 9};
\end{tikzpicture}
}
\end{matrix},~ 
Q^+ =
\begin{matrix} 
\resizebox{!}{1.5cm}{%
\begin{tikzpicture}[node distance=0 cm,outer sep = 0pt]
	      \node[bsq] (1) at (0,  0) {\bf \Large 3};
	      \node[bsq] (2) [right = of 1] {\bf \Large 5};
	      \node[bsq] (3) [below = of 1] {\bf \Large 4};
	      \node[bsq] (4) [below = of 3] {\bf \Large 6};
\end{tikzpicture}
}
\end{matrix}~
\right)
\]
\end{center}
\caption{
A semistandard domino tableau and its 2-quotient.
}
\label{fig : SSDT}
\end{figure}
\end{example}

We say that this bijection is {\bf shape preserving} as if $T \in SDT(\la)$ then $(T^-,T^+) \in SBT(\la^-,\la^+)$, i.e. the bi-shape of $(T^-,T^+)$ only depends on the shape of $T$. Obviously it is not descent preserving. 
Figure \ref{fig : AllBij} summarises the known bijections between signed permutations, domino tableaux and bi-tableaux.\\ 
\begin{figure}[htbp]
\begin{center}
 \includegraphics[scale=0.45]{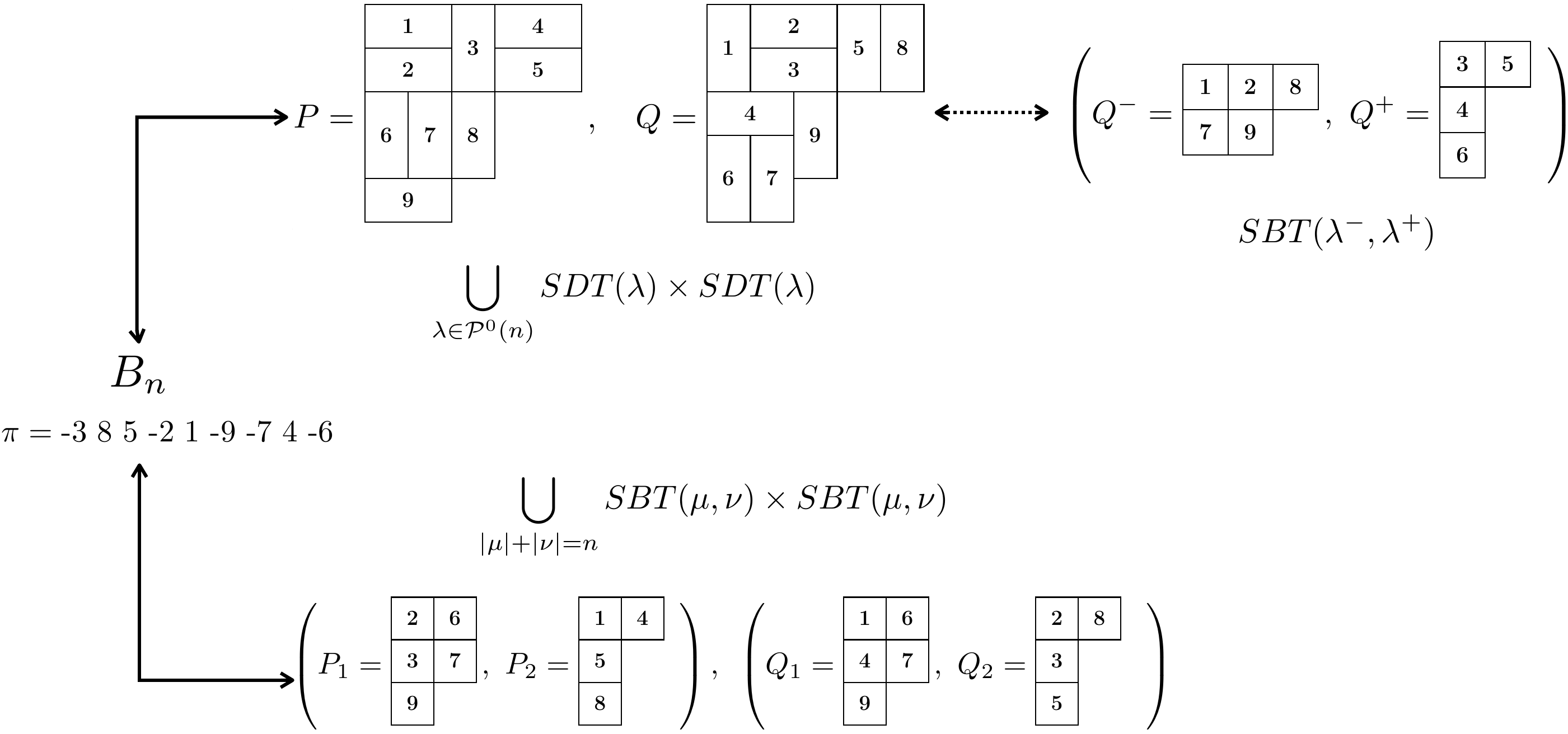}\caption{A summary of the bijections between signed permutations, domino tableaux and bi-tableaux. Plain arrows indicate descent preserving bijections while the dotted one is shape preserving.}
 \label{fig : AllBij}
 \end{center}
 \end{figure}
Using the two type B analogues of the RS correspondence, one may build a descent preserving bijection between domino and bi-tableaux. However, this bijection would not be shape preserving. On the contrary, the Littlewood bijection is shape preserving but not descent preserving. We argue that there is a bijection between standard domino tableaux and standard bi-tableaux that is both descent and shape preserving. This is the object of Lemma \ref{lem : W3}. 
\begin{lem} 
\label{lem : W3} Let $\la \in \mathcal{P}^0(n)$. There is an implicit bijection $\phi_3~:~SBT(\la^-,\la^+) \longrightarrow SDT(\la) $ between standard bi-tableaux of bi-shape $(\la^-,\la^+)$ and standard domino tableaux of shape $\la$, such that for $(T_1,T_2) \in SBT(\la^-,\la^+)$
$$
Des_r((T_1, T_2)) = Des(\phi_3(T_1, T_2)).
$$
\end{lem}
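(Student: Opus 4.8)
The plan is to construct $\phi_3$ \emph{implicitly}, by showing that for each descent set $D \subseteq \{0\}\cup[n-1]$ the number of standard domino tableaux of shape $\la$ with $Des = D$ equals the number of standard bi-tableaux of bi-shape $(\la^-,\la^+)$ with $Des_r = D$. Once these refined counts agree, any bijection that sends each standard bi-tableau to a standard domino tableau with the same descent set satisfies the required identity $Des_r((T_1,T_2)) = Des(\phi_3(T_1,T_2))$. Such a $\phi_3$ is automatically shape preserving, since the whole argument runs inside a fixed $\la \in \mathcal{P}^0(n)$ whose $2$-quotient $(\la^-,\la^+)$ is therefore also fixed.

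The engine of the argument is that both families of standard objects expand the \emph{same} function into Chow's fundamental quasisymmetric functions. On the domino side this is Equation (\ref{eq : GdX}), $\mathcal{G}_\la(X) = \sum_{T \in SDT(\la)} F^B_{Des(T)}(X)$, while Equation (\ref{eq : Gss}) identifies $\mathcal{G}_\la(X)$ with $s_{\la^-}(X^*)s_{\la^+}(X)$. First I would establish the bi-tableau counterpart, namely
\[
s_{\la^-}(X^*)s_{\la^+}(X) = \sum_{(T_1,T_2)\in SBT(\la^-,\la^+)} F^B_{Des_r(T_1,T_2)}(X).
\]
This is the bi-tableau analogue of Gessel's expansion (\ref{eq : Sdecomp}): one writes the left-hand side as $\sum_U X^U$ over \emph{semistandard} bi-tableaux $U$ of bi-shape $(\la^-,\la^+)$ (with the first component drawn from $X^*$ and the second from $X$, zeros allowed only in the second), groups the monomials by the standardization of $U$, and checks that the monomials standardizing to a fixed standard bi-tableau $(T_1,T_2)$ are exactly the $x_{i_1}\cdots x_{i_n}$ with $0 = i_0 \le i_1 \le \dots \le i_n$ subject to $i_j < i_{j+1}$ precisely for $j \in Des_r(T_1,T_2)$, which is Chow's defining sum for $F^B_{Des_r(T_1,T_2)}$.

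Combining the two expansions gives $\sum_{T\in SDT(\la)} F^B_{Des(T)} = \sum_{(T_1,T_2)\in SBT(\la^-,\la^+)} F^B_{Des_r(T_1,T_2)}$. Since Chow's functions $\{F^B_I\}_{I \subseteq \{0\}\cup[n-1]}$ are linearly independent (they form a basis of his type B analogue of $QSym$, as illustrated in the example for $n=2$), I would equate the coefficient of each $F^B_D$ to obtain
\[
|\{T \in SDT(\la) \mid Des(T) = D\}| = |\{(T_1,T_2)\in SBT(\la^-,\la^+) \mid Des_r(T_1,T_2) = D\}|
\]
for every $D$, and then take $\phi_3$ to be any descent-class-by-descent-class bijection, which exists by the equality of these cardinalities.

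I expect the main obstacle to be the standardization step used to prove the bi-tableau expansion. One must fix a single total order on the cells of the two components that breaks ties between equal semistandard entries and handles the zero entries (confined to the second tableau), so that the forced strict ascents of the standardization land \emph{exactly} on $Des_r(T_1,T_2)$. The delicate point is the position-$0$ descent: the constraint $i_0 < i_1$ forbids the smallest letter from being $x_0$, and this must translate into the smallest entry lying in $T_1$, i.e. into $1 \in T_1$, which is precisely the condition putting $0$ into $Des_r(T_1,T_2)$. Verifying this, together with the ordinary in-tableau descent condition ($s+1$ strictly below $s$) and the cross-tableau condition ($s \in T_2,\ s+1 \in T_1$), is where the care is required; the rest of the argument is then formal.
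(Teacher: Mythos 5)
Your proposal is correct and follows essentially the same route as the paper: both expand $\mathcal{G}_\la$ in Chow's fundamental basis twice, once over $SDT(\la)$ via Equation (\ref{eq : GdX}) and once over $SBT(\la^-,\la^+)$ via Equation (\ref{eq : Gss}), then invoke linear independence of the $F^B_I$ to equate the descent-class cardinalities and define $\phi_3$ implicitly, class by class. The only divergence is how the bi-tableau expansion is justified: the paper imports it from Proposition 4.2 of \cite{AdiAthEliRoi15} (stated for Poirier's quasisymmetric functions) combined with Lemma~\ref{rem : reordering}, whereas you propose a direct standardization argument over semistandard bi-tableaux --- which works, and your checklist of delicate points (the tie-breaking order between the two components, and the fact that the constraint $i_0 < i_1$ corresponds exactly to $1 \in T_1$, i.e. to $0 \in Des_r$) is precisely what needs to be verified.
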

\begin{proof}
Proposition 4.2 in~\cite{AdiAthEliRoi15} states that for all $(\la, \mu)$:
$$
s_\la(X) s_\mu(Y) = \sum_{(T_1, T_2) \in SBT(\la, \mu)} F^P_{sDes((T_1, T_2))}(X, Y). 
$$
Then using Equation \ref{eq : Gss} and Lemma~\ref{rem : reordering} we get 
$$
\mathcal{G}_{\la}(X) = s_{\la^-}(X^*)s_{\la^+}(X)  = \sum_{(T_1, T_2) \in SBT(\la^-, \la^+)} F^B_{Des_r((T_1, T_2))}(X).
$$
On the other hand,
$$
\mathcal{G}_{\la}(X) = \sum_{T \in SDT(\la)} F^B_{Des(T)}(X). 
$$
As the set of type B fundamental quasisymmetric functions is a basis of the ring of type B quasisymmetric functions, the following multisets coincide for any $\la \in \mathcal{P}^0(n)$:
$$
\{Des_r((T_1, T_2))\}_{(T_1, T_2) \in SBT(\la^-, \la^+)} = \{Des(T)\}_{T \in SDT(\la)}.
$$
As a result, there exists a bijection with the required property. 
\end{proof}
\subsection{A second bijection for signed arc permutations}

We use the tools of the previous sections to provide a new proof of Theorem \ref{thm:SAP}. To this end we build a new descent preserving bijection $\phi_2$ between signed arc permutations and bi-tableaux and recover Theorem \ref{thm:SAP} thanks to $\phi_3$. 
More precisely our construction is composed of three mappings. First,  apply bijection $\phi_1$ from Lemma~\ref{lem : W1}. Then we use our new mapping $\phi_2$ that maps the elements of $\phi_1(\mathcal{A}^s_n)$ to standard bi-tableaux. 
The restrictions of $\phi_2$ to each of the six types of signed arc permutations are bijections with standard bi-tableaux whose bi-shapes are the 2-quotients of the shapes in Theorem \ref{thm:SAP}. Finally, we use the bijection $\phi_3$ from Lemma~\ref{lem : W3}.\\
The main ingredient of this section is the following lemma. 
\begin{lem} 
\label{lem : W2} There is a descent preserving mapping $\phi_2$ from the set of signed arc permutations of $\phi_1(\mathcal{A}^s_n)$ to a subset of standard bi-tableaux. The restriction of $\phi_2$ to any of the six types of signed arc permutations is a bijection with standard bi-tableaux whose bi-shapes are the 2-quotients of the shapes in Theorem \ref{thm:SAP}.
\end{lem}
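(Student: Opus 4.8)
The plan is to construct $\phi_2$ by mimicking the six explicit domino constructions of Section~\ref{sec : arcB}, but recording the data in a pair of Young tableaux rather than in a single domino tableau. The first step is to compute, for each of the seven shape families occurring in Theorem~\ref{thm:SAP}, its $2$-quotient: a short beta-number (abacus) calculation shows that every such $2$-quotient $(\la^-,\la^+)$ consists of very simple partitions, namely a single row, a hook $(m,1)$, or the empty partition. Indeed $(2n)$ and $(2n-1,1)$ give $(\emptyset,(n))$ and $((n),\emptyset)$; $(2n-2,1,1)$ and $(2n-3,1,1,1)$ give $(\emptyset,(n-1,1))$ and $((n-1,1),\emptyset)$; the shapes $(a,2n-a)$ give pairs of single rows; and $(a,2n-a-2,2)$, $(a,2n-a-2,1,1)$ give one row together with one hook. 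Consequently a standard bi-tableau of the relevant bi-shape is pinned down by just two pieces of data: the splitting of $[n]$ between $T_1$ and $T_2$, and the single corner entry sitting in the second row whenever a hook occurs. This is exactly the information that the domino rules of Section~\ref{sec : arcB} already encode, which is what makes the bi-tableau proof ``somewhat simpler''.

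Concretely, for $\sigma=\phi_1(\pi)\in\phi_1(\mathcal{A}^s_n)$ I would scan $\sigma(1),\dots,\sigma(n)$ and place the label $i$ into $T_2$ if $\sigma(i)>0$ and into $T_1$ if $\sigma(i)<0$, appending by default to the first row of the relevant tableau; the finitely many exceptions (one per pattern, playing here the role of Rules~3 and~4) instead send a label into the second row, creating the hook. It is cleanest to describe the composite $\phi_2\circ\phi_1$ directly on $\mathcal{A}^s_n$ using the type descriptions of Figure~\ref{fig : SAPermutations} and then read off $\phi_2=(\phi_2\circ\phi_1)\circ\phi_1^{-1}$. This yields single rows for types~5 and~6, an empty $T_1$ for type~1 and an empty $T_2$ for type~2, and a (row, hook) bi-tableau for types~3 and~4, matching the $2$-quotients above. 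Bijectivity onto $SBT(\la^-,\la^+)$ follows by exhibiting the inverse (the sign pattern recovers the split of $[n]$ and hence the integer $k$, and for types~3,4 also $l$, while the corner box recovers the exceptional position); alternatively, since the maps of Section~\ref{sec : arcB} are already bijections onto $SDT(\la)$ and $|SBT(\la^-,\la^+)|=|SDT(\la)|$ by Lemma~\ref{lem : W3}, injectivity plus the cardinality count suffices.

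For descent preservation I would verify $Des_r(\sigma)=Des_r(\phi_2(\sigma))$ by a local analysis of each adjacency $\sigma(i),\sigma(i+1)$. The purpose of $\phi_1$ is precisely to reorder the negative run so that, under the order $<_r$ (negatives before $0$ before positives, negatives compared by absolute value), consecutive equal-sign entries of $\sigma$ become monotone: a sign change from $+$ to $-$ is a $<_r$-descent exactly when $i\in T_2$ and $i+1\in T_1$, a sign change from $-$ to $+$ is never a descent, and the exceptional second-row placements are arranged so that the within-tableau lower-row condition fires at precisely the remaining descents, including the descent in position $0$, which corresponds to $1\in T_1$. Checked line by line against the definition of $Des_r$ on bi-tableaux, this gives the claimed equality.

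The main obstacle is type~3, exactly as it was in Section~\ref{sec : arcB}: the wrap-around descent $n\to 1$, the first sign change after $n$, and the position of $1$ interact to produce the seven patterns of Figure~\ref{fig : ExceptionsTable}, and I expect most of the work to be checking, pattern by pattern, that the unique exceptional second-row entry lands so as to reproduce $Des_r(\sigma)$ while hitting every standard bi-tableau of bi-shape equal to the $2$-quotient of $(a,2n-a-2,2)$ exactly once. A secondary bookkeeping point is the factor $2$: types~5 and~6 must map bijectively onto two disjoint copies of $SBT$ for the row-pair bi-shapes arising from $(a,2n-a)$, and the six images together must partition the full target family without overlap, after which summing $F^B_{Des_r}$ over the image and applying the identity $\mathcal{G}_\la=\sum_{(T_1,T_2)\in SBT(\la^-,\la^+)}F^B_{Des_r(T_1,T_2)}$ from the proof of Lemma~\ref{lem : W3} recovers Theorem~\ref{thm:SAP}.
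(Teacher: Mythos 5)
Your proposal matches the paper's proof: the paper defines $\phi_2$ on $\phi_1(\mathcal{A}^s_n)$ exactly as you describe --- each label $i$ goes to the first row of $T_2$ or $T_1$ according to the sign of $\pi_i$, with a single second-row exception (at the entry $\pm 1$, placed in the second row precisely when $\pm n$ occurs with the same sign) creating the hook --- and then records, type by type, that the resulting bi-shapes are the $2$-quotients of the shapes in Theorem~\ref{thm:SAP} and that $Des_r$ (and $sDes$) is preserved. The only real difference is that the paper's uniform $\pm 1$ rule makes type~3 no harder than the other types, so the seven-pattern case analysis you anticipate carrying over from Figure~\ref{fig : ExceptionsTable} is not needed.
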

\begin{proof}
The main idea is to build for a signed permutation $\pi$ a bi-tableau $\phi_2(\pi)$ such that the negative elements of $\pi$ correspond to the first Young tableau and the positive elements to the second one. This ensures that signed descents coincide in positions with signs $-+$ and $+-$. Preserving signed descents in positions with signs $--$ ($++$) requires having the descents in the first (second) Young tableau in the same positions as the negative (positive) signed descents in $\pi$. We proceed as follows.\\
Let $\pi = \pi_1\pi_2\dots\pi_n$ be a permutation of $\phi_1(\mathcal{A}^s_{n})$. Begin with the empty bi-tableau $(T_1 = \emptyset, T_2 = \emptyset)$. For $1\leq i \leq n$ build recursively a two-row bi-tableau according to the following procedure. 
\begin{itemize}
\item If $\pi_i>1$ add a square with label $i$ to the first row in $T_2$.
\item If $\pi_i<\text{-}1$ add a square with label $i$ to the first row in $T_1$.
\item If $\pi_i=1$ add a square with label $i$ in $T_2$. If $n \in \pi$ then add it to the second row. Otherwise, add it to the first one. 
\item If $\pi_i=\text{-}1$ add a square with label $i$ in $T_1$. If $\text{-}n \in \pi$ then add it to the second row. Otherwise, add it to the first one.
\end{itemize}

We describe the restriction of $\phi_1(\mathcal{A}^s_{n})$ to each of the six types and the template of the corresponding standard bi-tableaux in Figure \ref{fig : W2} where we denote by $p_{a}$ the position of $\pm a$ in $\phi_1(\pi)$. Clearly, the restriction of $\phi_2$ to each type of signed arc permutations is a descent preserving bijection between the considered set of permutations and the standard bi-tableaux of the corresponding shape. Both the statistics $sDes$ and $Des_r$ are preserved.
\end{proof}
\begin{figure}[h]
\begin{center}
\begin{tabular}[t]{|c|c|}
\hline
type 1 & $\phi_1(\pi) \in \bigcup_{1 < k \leq n} \{k \dots n~1 \dots k-1\} ~\bigcup~ \{1 \dots n\}$ \\[5px]
$\phi_2\phi_1(\pi)$ & $\left (~
\emptyset
~~,~~
\begin{matrix}
\resizebox{!}{1.2cm}{%
\begin{tikzpicture}[node distance=0 cm,outer sep = 0pt]
        \node[bsq] (1) at ( 0,   0) {$p_{k}$};
        \node[bsq] (2) [right = of 1] {$p_{k+1}$};       
		\draw (3,0) node {\bf \LARGE \dots\dots};
        \node[bsq] (6) at ( 5,   0) {$p_{k-1}$};      
        \node[bsq] (7) [below = of 1] {$p_{1}$}; 
\end{tikzpicture}  
}
\end{matrix} ~\right )
~~\text{or}~~
\left (
\emptyset
~~,~~
\begin{matrix}
\resizebox{!}{0.6cm}{%
\begin{tikzpicture}[node distance=0 cm,outer sep = 0pt]
        \node[bsq] (1) at ( 0,   0) {$p_{1}$};
        \node[bsq] (2) [right = of 1] {$p_{2}$};       
		\draw (3,0) node {\bf \LARGE \dots\dots};
        \node[bsq] (6) at ( 5,   0) {$p_{n}$};   
\end{tikzpicture}  
}
\end{matrix}~\right)$ \\
\hline
type 2 & $\phi_1(\pi) \in \bigcup_{1< k \leq n} \{\text{-}k \dots \text{-} n~ \text{-} 1 \dots \text{-}(k-1)\} ~\bigcup~ \{\text{-} 1 \dots \text{-} n\}$ \\[5px]
$\phi_2\phi_1(\pi)$ & $\left (~
\begin{matrix}
\resizebox{!}{1.2cm}{%
\begin{tikzpicture}[node distance=0 cm,outer sep = 0pt]
        \node[bsq] (1) at ( 0,   0) {$p_{k}$};
        \node[bsq] (2) [right = of 1] {$p_{k+1}$};       
		\draw (3,0) node {\bf \LARGE \dots\dots};
        \node[bsq] (6) at ( 5,   0) {$p_{k-1}$};      
        \node[bsq] (7) [below = of 1] {$p_{1}$}; 
\end{tikzpicture}  
}
\end{matrix}
~~,~~ \emptyset
~\right )
~~\text{or}~~
\left (~
\begin{matrix}
\resizebox{!}{0.6cm}{%
\begin{tikzpicture}[node distance=0 cm,outer sep = 0pt]
        \node[bsq] (1) at ( 0,   0) {$p_{1}$};
        \node[bsq] (2) [right = of 1] {$p_{2}$};       
		\draw (3,0) node {\bf \LARGE \dots\dots};
        \node[bsq] (6) at ( 5,   0) {$p_{n}$};   
\end{tikzpicture}  
}
\end{matrix}
~~,~~\emptyset
~\right )$ \\
\hline
type 3 & $\phi_1(\pi) \in \bigcup_{1 \le l < k < n} \{\text{-}(l+1) \dots \text{-} k~ ~\shuffle~~ k+1 \dots n~1 \dots l\}$ \\[5px]
$\phi_2\phi_1(\pi)$ & $\left (~
\begin{matrix}
\resizebox{!}{0.6cm}{%
\begin{tikzpicture}[node distance=0 cm,outer sep = 0pt]
        \node[bsq] (1) at ( 0,   0) {$p_{l+1}$};
        \node[bsq] (2) [right = of 1] {$p_{l+2}$};       
		\draw (3,0) node {\bf \LARGE \dots\dots};
        \node[bsq] (6) at ( 5,   0) {$p_{k}$}; 
\end{tikzpicture}  
}
\end{matrix}
~~,~~ 
\begin{matrix}
\resizebox{!}{1.2cm}{%
\begin{tikzpicture}[node distance=0 cm,outer sep = 0pt]
        \node[bsq] (1) at ( 0,   0) {$p_{k+1}$};
        \node[bsq] (2) [right = of 1] {$p_{k+2}$};       
		\draw (3,0) node {\bf \LARGE \dots\dots};
        \node[bsq] (6) at ( 5,   0) {$p_{l}$}; 
        \node[bsq] (7) [below = of 1] {$p_{1}$};
\end{tikzpicture}  
}
\end{matrix}
~\right )$ \\
\hline
type 4 & 
\footnotesize
$\begin{matrix}
\phi_1(\pi) \in \bigcup_{1 \le k < l < n, k = n-l} \{\text{-}(l+1) \dots \text{-} n ~ \text{-} 1 \dots \text{-} (k) ~~\shuffle~~k+1 \dots l\} \\
\bigcup_{1 \le k < l < n, k > n-l} \{\text{-}(n-l+1) \dots \text{-} k~\text{-} (l+1) \dots \text{-} n ~ \text{-} 1 \dots \text{-} (n-l) ~~\shuffle~~k+1 \dots l\} \\
\bigcup_{1 \le k < l < n, k < n-l} \{\text{-}(n-k+1) \dots \text{-} n~ \text{-} 1 \dots \text{-} k~\text{-} (l+1) \dots \text{-} (n-k) ~~\shuffle~~k+1 \dots l\} \\
\text{Let $\text{-}\al_1, \dots \text{-}\al_{n - l + k}$ be the negative subsequence.}
\end{matrix}
$
\\[5px]
\normalsize
$\phi_2\phi_1(\pi)$ & $\left (~
\begin{matrix}
\resizebox{!}{1.2cm}{%
\begin{tikzpicture}[node distance=0 cm,outer sep = 0pt]
        \node[bsq] (1) at ( 0,   0) {$p_{\al_1}$};
        \node[bsq] (2) [right = of 1] {$p_{\al_2}$};       
		\draw (3,0) node {\bf \LARGE \dots\dots};
        \node[bsq] (6) at ( 5,   0) {$p_{\al_{n-l+k}}$};
        \node[bsq] (7) [below = of 1] {$p_{1}$}; 
\end{tikzpicture}  
}
\end{matrix}
~~,~~ 
\begin{matrix}
\resizebox{!}{0.6cm}{%
\begin{tikzpicture}[node distance=0 cm,outer sep = 0pt]
        \node[bsq] (1) at ( 0,   0) {$p_{k+1}$};
        \node[bsq] (2) [right = of 1] {$p_{k+2}$};       
		\draw (3,0) node {\bf \LARGE \dots\dots};
        \node[bsq] (6) at ( 5,   0) {$p_{l}$};        
\end{tikzpicture}  
}
\end{matrix}
~\right )$ \\
\hline
type 5 & $\phi_1(\pi) \in \bigcup_{1 \leq k < n} \{\text{-} 1\dots \text{-} k~ ~\shuffle~~ k+1\dots n\}$ \\[5px]
$\phi_2\phi_1(\pi)$ & $\left (~
\begin{matrix}
\resizebox{!}{0.6cm}{%
\begin{tikzpicture}[node distance=0 cm,outer sep = 0pt]
        \node[bsq] (1) at ( 0,   0) {$p_{1}$};
        \node[bsq] (2) [right = of 1] {$p_{2}$};       
		\draw (3,0) node {\bf \LARGE \dots\dots};
        \node[bsq] (6) at ( 5,   0) {$p_{k}$}; 
\end{tikzpicture}  
}
\end{matrix}
~~,~~ 
\begin{matrix}
\resizebox{!}{0.6cm}{%
\begin{tikzpicture}[node distance=0 cm,outer sep = 0pt]
        \node[bsq] (1) at ( 0,   0) {$p_{k+1}$};
        \node[bsq] (2) [right = of 1] {$p_{k+2}$};       
		\draw (3,0) node {\bf \LARGE \dots\dots};
        \node[bsq] (6) at ( 5,   0) {$p_{n}$};        
\end{tikzpicture}  
}
\end{matrix}
~\right)$\\
\hline
type 6 & $\phi_1(\pi) \in \bigcup_{1\leq k < n} \{\text{-}(k+1)\dots \text{-} n ~~\shuffle~~ 1\dots k\}$ \\[5px]
$\phi_2\phi_1(\pi)$ & $\left(~
\begin{matrix}
\resizebox{!}{0.6cm}{%
\begin{tikzpicture}[node distance=0 cm,outer sep = 0pt]
        \node[bsq] (1) at ( 0,   0) {$p_{k+1}$};
        \node[bsq] (2) [right = of 1] {$p_{k+2}$};       
		\draw (3,0) node {\bf \LARGE \dots\dots};
        \node[bsq] (6) at ( 5,   0) {$p_{n}$}; 
\end{tikzpicture}  
}
\end{matrix}
~~,~~ 
\begin{matrix}
\resizebox{!}{0.6cm}{%
\begin{tikzpicture}[node distance=0 cm,outer sep = 0pt]
        \node[bsq] (1) at ( 0,   0) {$p_{1}$};
        \node[bsq] (2) [right = of 1] {$p_{2}$};       
		\draw (3,0) node {\bf \LARGE \dots\dots};
        \node[bsq] (6) at ( 5,   0) {$p_{k}$};        
\end{tikzpicture}  
}
\end{matrix}
~\right)$ \\
\hline
\end{tabular}
\end{center}
\caption{Illustration of the mapping $\phi_2$.}
\label{fig : W2}
\end{figure}

Now we are ready to reprove Theorem~\ref{thm:SAP}. The restriction of the mapping $\phi_3 \phi_2 \phi_1$ to each type is a bijection between signed arc permutations of this type and a subset of standard domino tableaux. As proven in Lemmas \ref{lem : W1}, \ref{lem : W3} and \ref{lem : W2} it is descent preserving in the sense that
$$Des(\pi) = Des_r(\phi_1(\pi)) = Des_r(\phi_2\phi_1(\pi)) = Des(\phi_3\phi_2\phi_1(\pi)).$$
Finally, the shapes of the bi-tableaux described in Figure \ref{fig : W2} are exactly the 2-quotients of the shapes from Propositions~\ref{prop : 56}, \ref{prop : 1}, \ref{prop : 2}, \ref{prop : 4} and~\ref{prop : 3}. Since $\phi_3$ is shape preserving, we recover Theorem \ref{thm:SAP}.

\printbibliography
\end{document}